\newtheorem{theorem}{Theorem}[section]
\newtheorem{corollary}[theorem]{Corollary}
\newtheorem{lemma}[theorem]{Lemma}
\newtheorem{example}[theorem]{Example}
\newtheorem{definition}[theorem]{Definition}
\newtheorem{remark}[theorem]{Remark}
\numberwithin{equation}{section}
\DeclareMathAlphabet{\mathsc}{OT1}{lmr}{m}{scsl}
\newcommand{\condexp}[1]{\left|}
\newcommand{\Real}{\mathbb R}
\newcommand\mycom[2]{\genfrac{}{}{0pt}{}{#1}{#2}}
\newcommand{\thetabs}{\boldsymbol{\theta}}
\newcommand{\BBs}{\boldsymbol{B}}
\newcommand{\Sbs}{\boldsymbol{S}}
\newcommand{\A}{\mathcal{A}}
\newcommand{\punt}{\boldsymbol{.}}
\newcommand{\En}{\mathbb{E}}
\newcommand{\trasp}{\footnotesize T}
\newcommand{\ibs}{\boldsymbol{i}}
\newcommand{\Nbs}{\boldsymbol{N}}
\newcommand{\mubs}{\boldsymbol{\mu}}
\newcommand{\kappabs}{\boldsymbol{\kappa}}
\newcommand{\jbs}{\boldsymbol{j}}
\newcommand{\ms}{\scriptscriptstyle}
\newcommand{\mbs}{\boldsymbol{m}}
\newcommand{\zbs}{\boldsymbol{z}}
\newcommand{\zerobs}{\boldsymbol{0}}
\newcommand{\unobs}{\boldsymbol{1}}
\newcommand{\ybs}{\boldsymbol{y}}
\newcommand{\xbs}{\boldsymbol{x}}
\newcommand{\mf}{\mathfrak{m}}
\newcommand{\lf}{\mathfrak{l}}
\newcommand{\Xbs}{\boldsymbol{X}}
\newcommand{\Ybs}{\boldsymbol{Y}}
\newcommand{\Tr}{\hbox{\rm Tr}}
\newcommand{\lambdabs}{\boldsymbol{\lambda}}
\newcommand{\C}{\mathcal{C}}
\newcommand\smallN{
    \mathchoice
    {
      \scriptstyle{N}
    }
    {
      \scriptstyle{N}
    }
    {
      \scalebox{0.8}{$\scriptscriptstyle{N}$}
    }
    {
      \scalebox{0.6}{$\scriptscriptstyle{N}$}
    }
}
\newcommand\smallX{
    \mathchoice
    {
      \scriptstyle{\Xbs}
    }
    {
      \scriptstyle{\Xbs}
    }
    {
      \scalebox{0.8}{$\scriptscriptstyle{\Xbs}$}
    }
    {
      \scalebox{0.6}{$\scriptscriptstyle{\Xbs}$}
    }
    }
\newcommand\smallY{
    \mathchoice
    {
      \scriptstyle{\Ybs}
    }
    {
      \scriptstyle{\Ybs}
    }
    {
      \scalebox{0.8}{$\scriptscriptstyle{\Ybs}$}
    }
    {
      \scalebox{0.6}{$\scriptscriptstyle{\Ybs}$}
    }
    }
\newcommand\smalluno{
    \mathchoice
    {
      \scriptstyle{1}
    }
    {
      \scriptstyle{1}
    }
    {
      \scalebox{0.8}{$\scriptscriptstyle{1}$}
    }
    {
      \scalebox{0.8}{$\scriptscriptstyle{1}$}
    }
}
\newcommand\smalln{
    \mathchoice
    {
      \scriptstyle{n}
    }
    {
      \scriptstyle{n}
    }
    {
      \scalebox{0.8}{$\scriptscriptstyle{n}$}
    }
    {
      \scalebox{0.8}{$\scriptscriptstyle{n}$}
    }
}
\begin{document}
\title{On multivariable cumulant polynomial sequences with applications}
\author{E. Di Nardo\footnote{Department of Mathematics \lq\lq G.Peano\rq\rq, 
University of Turin, Via Carlo Alberto 10, 10123 Turin, Italia, elvira.dinardo@unito.it}}
\date{ }
\maketitle
\begin{abstract}
A new family of polynomials, called cumulant polynomial sequence, and its extension to the multivariate case is introduced relied on a purely symbolic combinatorial method. The coefficients are cumulants, but depending on what is plugged in the indeterminates, also moment sequences can be recovered. The main tool is a formal generalization of random sums,  when a not necessarily integer-valued multivariate random index is considered. Applications are given within parameter estimations, L\'evy processes and random matrices and, more generally, problems involving multivariate functions. The connection between exponential models and multivariable Sheffer polynomial sequences offers a different viewpoint in employing the method. Some open problems end the paper.
\end{abstract}

{\bf Keywords: \,}{multi-index partition, cumulant, generating function, formal power series, L\'evy process, exponential model}
\\ \indent

\section{Introduction}
The so-called {\it symbolic moment method} has its roots in the classical umbral calculus developed by Rota and his collaborators since $1964,$ \cite{Rota}. Rewritten in $1994$ (see \cite{Taylor}), what we have called moment symbolic method consists in a calculus on unital number sequences. Its basic device is to represent a sequence $\{1, a_1, a_2, \ldots\}$ with the sequence $\{1, \alpha, \alpha^2, \ldots\}$ of powers of a symbol $\alpha,$ named umbra. The sequence $\{1, a_1, a_2, \ldots\}$ is said to be umbrally represented by the umbra $\alpha.$ 
The main tools of the symbolic moment method are \cite{Bernoulli}: 
\begin{enumerate}
\item[{\it a)}] a polynomial ring ${\mathbb C}[{\cal A}]$ with ${\cal A}=\{ \alpha, \beta, \gamma, \ldots\}$ a set of symbols called umbrae;
\item[{\it b)}] a unital operator $\En: {\mathbb C}[{\cal A}] \rightarrow {\mathbb C},$ called {\it evaluation}, such that
\begin{enumerate}
\item[{\it i)}] $\En[\alpha^i] = a_i$ for non-negative integers $i,$
\item[{\it ii)}] $\En[\alpha^i \beta^j \cdots] = \En[\alpha^i] \En[\beta^j] \cdots$ for distinct umbrae $\alpha, \beta, \ldots$ and non-negative integers $i,j,\ldots$ {\it (uncorrelation property)}.
\end{enumerate}
\end{enumerate} 
The linear operator $\En$ looks like the expectation $E$ of a random variable (r.v.) and $a_i$ is called 
the {\it $i$-th moment\/} of $\alpha.$ This method shares with free probability \cite{Speicher} the employment of moments as a tool to characterize r.v.'s. In particular, a r.v. $X$ is said to be represented by an umbra $\alpha,$ if its sequence of moments is umbrally represented by an umbra $\alpha.$   

One feature of the method is that the same sequence of numbers can be umbrally represented by using two distinct umbrae. When this happens, the umbrae are said {\it similar}. More in details, two umbrae $\alpha$ and $\gamma$ are similar if and only if $\En[\alpha^i] = \En[\gamma^i]$ for all positive integers $i,$ in symbols $\alpha \equiv \gamma.$ 

In the following, we give an example on how to benefit of working with powers instead of indexes.  
\begin{example} \label{1.1} {\rm If $\{a_i\}, \{g_i\}, \ldots, \{b_i\}$ are sequences umbrally represented by the umbrae 
$\alpha, \gamma, \ldots, \zeta$ respectively, then
\begin{equation} 
\En {\underbrace{\left(\alpha + \gamma + \cdots + \zeta\right)}_n}^i = \sum_{k_1 + k_2 + \cdots +k_n = i} {\binom{i}{k_1, k_2, \ldots,k_n}} a_{k_1} g_{k_2} \cdots b_{k_n}
\label{(summ)}
\end{equation}
for all non-negative integers $i.$ If we replace the set $\{\alpha, \gamma, \ldots, \zeta\}$ by a set of $n$ distinct and similar umbrae $\{\alpha, \alpha^{\prime}, \ldots, \alpha^{\prime \prime}\},$ from (\ref{(summ)}) we have 
\begin{equation}
\En[(n \punt \alpha)^i] = \En[{\underbrace{\left(\alpha + \alpha^{\prime} + \cdots + \alpha^{\prime \prime}
\right)}_n}^i]=\sum_{ \lambda \vdash i} (n)_{\lf(\lambda)} d_{\lambda} a_{\lambda}
\label{(summ1)}
\end{equation}
where the summation is over all partitions\footnote[1]{Recall that a partition of an integer $i$ is a sequence 
$\lambda = (\lambda_1, \lambda_2, \ldots, \lambda_t),$ where $\lambda_j$ are weakly decreasing integers and
$\sum_{j=1}^t \lambda_j=i.$ The integers $\lambda_j$ are called parts. The length $\lf(\lambda)$
of $\lambda$ is 
the number of its parts. A different notation is $\lambda = (1^{r_1}, 
2^{r_2}, \ldots),$ where $r_j$ is the number of parts of $\lambda$ equal to $j$ and $r_1 + r_2 + \cdots = 
\lf(\lambda).$ We use the classical notation $\lambda \vdash i$ to denote that $\lambda$ is a partition of $i.$} 
$\lambda = (1^{r_1}, 2^{r_2}, \ldots)$ of the 
integer $i, a_{\lambda} = a^{r_1}_1 a^{r_2}_2 \cdots$ and
$$d_{\lambda} = \frac{i!}{(1!)^{r_1}(2!)^{r_2} \cdots r_1! r_2! \cdots}$$
is the number of $i$-set partitions with block sizes given by the parts of $\lambda.$
The new symbol $n \punt \alpha$ on the left-hand-side of (\ref{(summ1)}) is called the dot-product of $n$ and $\alpha$
and is an example of symbolic device useful to allow generalizations. Indeed in (\ref{(summ1)}), set $\En[(n \punt \alpha)^i] = q_i(n)$ and observe that $q_i(n)$ is a polynomial of degree $i$ in $n.$ Suppose that we replace $n$ by an umbra $\gamma.$ 
The symbol having sequence $\En[q_i(\gamma)]$ as moments is denoted by $\gamma \punt \alpha.$
If $\gamma$ umbrally represents the moment sequence of an integer-valued r.v. $N,$ then 
$\gamma \punt \alpha$ umbrally represents the moments of a random sum $S_{\ms N} = 
X_{\ms 1} + \cdots + X_{\ms N}$ with the independent and identically distributed (i.i.d.) r.v.'s 
$\{X_{\ms i}\}$ umbrally represented by $\alpha.$ Then the umbra $\gamma \punt \alpha$ 
is a symbolic device to manage a sum of $\gamma$ times the umbra $\alpha.$}
\end{example}
Despite the considerable development of the method from its first version \cite{Taylor} and the various applications within probability and statistics (see \cite{DiNardoreview} for a review), the aim of this paper is to re-formulate this symbolic method in terms of cumulant sequences. The reasons of this choice are twofold. First, cumulant sequences have nicer properties compared with moment sequences. In particular, if $\{c_i(X)\}$ is the cumulant sequence of a r.v. $X,$ then the following properties hold for all non-negative integers $i$:  (Homogeneity) $c_i(a X) = a^i c_i (X)$ for $a \in {\mathbb C},$ (Semi-invariance) $c_1(X+a)=a + c_1(X), c_i(X+a) = c_i(X)$ for $i \geq 2,$ (Additivity) $c_i(X_1+X_2) = c_i(X_1) + c_i(X_2),$ if $X_1$ and $X_2$ are independent r.v.'s. Moreover they represent a nimble tool to deal with random sums $S_{\ms N} = X_{\ms 1} + \cdots + X_{\ms N}.$ Indeed if $K_{\ms N}(z)$ is the cumulant generating function (cgf) of $N$ and $K_{\ms X}(z)$ is the cgf of $X_i,$ assumed to be convergent in some open set, then
\begin{equation}
K_{S_{\ms N}}(z)=K_{\ms N}(K_{\ms X}(z))
\label{randomsum}
\end{equation}
is the cgf of $S_{\ms N}.$ Although, in \cite{Berndt} Sturmfels and Zwiernik have underlined that \lq\lq the umbral calculus is an approach to combinatorial sequences using cumulants,\rq\rq up to now, by using both the operator theory  and the symbolic method, the theoretical approach has been focused on moments. Indeed, when in $1994$ Rota decided to turn upside down the umbral calculus, outlining what he considered to be the correct syntax for this matter, he used the Laplace transform. In this paper essentially we propose to employ the logarithm of Laplace transform not only for the properties of cumulants but also because, according to (\ref{randomsum}), to work with compositions of power series corresponds to work with cumulant sequences. Recall that generating functions (gf's) are employed in the so-called symbolic combinatorics. Symbolic combinatorics is a unified algebraic theory dedicated to functional relations between counting gf's, employed in place of more traditional recurrences \cite{FlS}. Gf's are formal power series like $\sum_{i \geq 0} a_i z^i \in {\mathbb C}[[z]]$ and operations on gf's exactly encode operations on counting sequences through coefficients.  Moreover, differently from symbolic combinatorics, the symbolic method works on polynomial sequences and operations on gf's encode operations on polynomial sequences in such a way that new sequences are generated by a suitable replacement of the indeterminates, as shown in Example \ref{1.1}. This replacement allows us to speed up many of the computations usually employed in statistics and involving polynomial sequences. In this paper, we show that further simplifications are obtained if instead of working with polynomials whose coefficients are moments, as in Example \ref{1.1}, we deal with polynomials whose coefficients are cumulants and for this reason called cumulant polynomial sequences. Despite the name, they are moments of stochastic processes with independent and stationary increments \footnote{A continuous-time stochastic process is a family of r.v.'s $\{X_t\}$ indexed by a time $t \geq 0.$ The increments are the differences $X_s-X_t$ between its values at different times $t < s.$ The increments of $X_t$ are said to be independent if $X_s-X_t$ and $X_u-X_v$ are independent r.v.'s, whenever the time intervals $(t,s)$ and $(v,u)$ do not 
overlap and for any non-overlapping time intervals. The increments are said to be stationary if their probability distribution depends only on the time interval length $s-t.$} but depending on what is plugged in place of indeterminates, cumulants can be recovered too.
The usefulness of this device is showed along the paper with examples and through different applications involving both stochastic processes and multivariate statistics.

For shortness, we refer to the multivariate setting of cumulants with exponential gf's. A sequence $\{c_{\ms \ibs}\}$ is the (multivariate) cumulant sequence of $\{m_{\ms \ibs}\}$ if
\begin{equation}
\sum_{\ibs \geq 0} m_{\ibs} \frac{\zbs^{\ibs}}{\ibs!} = \exp \left( \sum_{\ibs>0} c_{\ibs}
\frac{\zbs^{\ibs}}{\ibs!}\right).
\label{(mgfcum)}
\end{equation}
Equation (\ref{(mgfcum)}) is well defined in the ring\footnote[2]{The ring of formal power series ${\mathbb C}[[\zbs]]$ is the set ${\mathbb C}^{\mathbb N}$ of infinite sequence of elements of ${\mathbb C},$ written as infinite series $\sum_{\ibs \geq 0}a_{\ibs} \zbs^{\ibs},$ where $\zbs$ is a formal indeterminate. Formal power series extend the usual operations on polynomials. Any process on series involving each coefficient with only finitely many operations is well-defined.} of formal power series ${\mathbb C}[[\zbs]]$ (cf. \cite{FlS}). For $d=1,
h(t)=\exp(t)$ and $f(\zbs) = \sum_{\ibs > 0} c_{\ibs} \zbs^{\ibs} / \ibs!,$ equation (\ref{(mgfcum)}) is a particular case of
composition of general formal power series 
\begin{equation}
F(\zbs) = h(t_{\ms \smalluno}, \ldots, t_d)|_{t_i=f_{\ms i}(\zbs) \atop \! i=1, \ldots, d} = h[f_{\ms \smalluno}(\zbs), \ldots, f_{\ms d}(\zbs)]. 
\label{(faamult)}
\end{equation}
Note that to allow computations $f_{\ms i}(\zerobs)=0$ for $i=1, \ldots, d$ that is $f_{\ms i}(\zbs)$ needs to be a so-called delta series. The multivariate Fa\`a di Bruno's formula returns the coefficients of (\ref{(faamult)}). Differently from \cite{Faa}, in the following, we consider both $h$ and $\{f_{\ms \smalluno}, \ldots, f_{\ms d}\}$ to be cgf's.  This corresponds to work with a random sum\footnote[3]{As we work with different types of random sums, univariate r.v.'s indexed by multivariate r.v.'s, multivariate r.v.'s indexed by univariate r.v.'s and multivariate r.v.'s indexed by multivariate r.v.'s, then we use the bold letter to help in distinguishing among different cases.} $\Sbs_{\Nbs} = \Xbs_1 + \cdots  + \Xbs_{\Nbs}$ of independent random vectors i.d. to $\Xbs,$ indexed by a multivariate non-negative r.v. $\Nbs.$ The new symbolic calculus we are going to propose allows us to replace the index $\Nbs,$ with a $d$-tuple of symbols representing a sequence of cumulants not necessarily corresponding to an integer-valued random vector.

The rest of the paper is organized as follows. In Section $2$ the symbolic method is introduced in terms of cumulants. Section $3$ is devoted to cumulant polynomial sequences and their applications to L\'evy processes and multivariate statistics. Section $4$ introduces some generalization of cumulant polynomial sequences to the multivariate case. Applications to random matrices and exponential models are given. A special attention is devoted to connections with symmetric functions and their applications to simple random sampling. All the matter is dealing with the syntax of random vectors: the symbolic calculus is employed as a tool to lighting proofs and computations when necessary.  We focus on some open problems at the end of the paper.
\section{The cumulant symbolic method}
To introduce the multivariate version of the symbolic method in terms of cumulants we need to focus our attention on additivity property,
formulating the uncorrelation property {\it ii)} of the linear operator $\En$ in a different way. 

Let us consider $d$ umbral polynomials $\{\kappa_{\ms 1}, \ldots, \kappa_{\ms d}\} \in {\mathbb C}[{\mathcal A}].$ We define the support of an umbral polynomial as the set of all umbrae occurring in it and we assume $\{\kappa_{\ms 1}, \ldots, \kappa_{\ms d}\}$ have support not necessarily disjoint. For $d$-tuples $\kappabs = (\kappa_{\smalluno}, \ldots, \kappa_{\ms d}),$ the support is the union of the supports of $\{\kappa_{\smalluno}, \ldots, \kappa_{\ms d}\}.$
\begin{definition} \label{2.1}
Two $d$-tuples $\kappabs$ and $\tilde{\kappabs}$ are said to be uncorrelated if they have disjoint supports.
\end{definition}
Now assume $\{c_{\ibs}\} \in  {\mathbb C}$ with $\ibs=(i_{\smalluno}, \ldots, i_{\ms d}) \in {\mathbb N}_{\ms 0}^d$ a multi-index of non-negative integers. Define an unital linear operator $\En: {\mathbb C}[{\mathcal A}] \rightarrow {\mathbb C}$ such that
\begin{description}
\item[{\it a.1)}] $\En \left[ \kappa_{\ms 1}^{i_{\smalluno}} \cdots \kappa_{\ms d}^{i_{\ms d}} \right] = \En \left[ \kappabs^{\ms \ibs} 
\right] = c_{\ibs},$ for $d$-tuples $\kappabs = (\kappa_{\smalluno}, \ldots, \kappa_{\ms d});$
\item[{\it b.1)}] $\En\left[\kappabs^{\ibs} \tilde{\kappabs}^{\jbs} \cdots \right] = 0$ for all $\ibs,\jbs, \ldots \in {\mathbb N}_{\ms 0}^d$ if $\kappabs, \tilde{\kappabs}, \ldots$ are uncorrelated $d$-tuples.  
\end{description}
Assume $\En \left[\kappabs^{\ms \zerobs}\right]=1.$ From {\it a.1)}, we say that the sequence $\{c_{\ibs}\}$ is umbrally represented by the $d$-tuple $\kappabs$ and $c_{\ibs}$ is the $\ibs$-th cumulant of $\kappabs.$  According to Definition \ref{2.1}, let us observe that $\En \left[ \kappabs^{\ms \ibs} \right] = 0$ for some $\ibs,$ if the elements in the $d$-tuple $(\kappa_{\ms 1}, \ldots, \kappa_{\ms d})$ can be split in (at least) two tuples (of lenght less than $d$)  such that any element of the first tuple has disjoint support with any element of the second tuple.
\begin{lemma}
If $\kappabs$ and $\tilde{\kappabs}$ are uncorrelated $d$-tuples, then
\begin{equation}
\En[(\kappabs+\tilde{\kappabs})^{\ibs}] = \En[\kappabs^{\ibs}] + \En[\tilde{\kappabs}^{\ibs}] \quad \hbox{for all} \,\, \ibs \in {\mathbb N}_{\ms 0}^d.
\label{(sum)}
\end{equation}
\end{lemma}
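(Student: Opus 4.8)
The plan is to expand $(\kappabs+\tilde{\kappabs})^{\ibs}$ componentwise and then apply the two defining properties \emph{a.1)} and \emph{b.1)} of the operator $\En$, noting that the uncorrelation hypothesis forces almost every term in the expansion to vanish. Writing $\ibs=(i_{\smalluno},\ldots,i_{\ms d})$ and $\kappabs+\tilde{\kappabs}=(\kappa_{\ms 1}+\tilde{\kappa}_{\ms 1},\ldots,\kappa_{\ms d}+\tilde{\kappa}_{\ms d})$, I would first use the ordinary binomial theorem in each coordinate to write
\begin{equation}
(\kappabs+\tilde{\kappabs})^{\ibs}=\prod_{s=1}^{d}(\kappa_{\ms s}+\tilde{\kappa}_{\ms s})^{i_{\ms s}}
=\sum_{\zerobs\leq\jbs\leq\ibs}\binom{\ibs}{\jbs}\,\kappabs^{\jbs}\,\tilde{\kappabs}^{\ibs-\jbs},
\end{equation}
where $\binom{\ibs}{\jbs}=\prod_{s=1}^{d}\binom{i_{\ms s}}{j_{\ms s}}$ and the sum runs over all multi-indices $\jbs$ with $0\leq j_{\ms s}\leq i_{\ms s}$ for each $s$. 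Applying linearity of $\En$ term by term gives $\En[(\kappabs+\tilde{\kappabs})^{\ibs}]=\sum_{\zerobs\leq\jbs\leq\ibs}\binom{\ibs}{\jbs}\En[\kappabs^{\jbs}\tilde{\kappabs}^{\ibs-\jbs}]$.

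The key step is then to observe that, since $\kappabs$ and $\tilde{\kappabs}$ are uncorrelated $d$-tuples (disjoint supports), property \emph{b.1)} applies to every mixed term: $\En[\kappabs^{\jbs}\tilde{\kappabs}^{\ibs-\jbs}]=0$ whenever both $\jbs\neq\zerobs$ and $\ibs-\jbs\neq\zerobs$, i.e.\ whenever the monomial genuinely involves umbrae from both supports. Only the two extreme terms survive: $\jbs=\ibs$, which contributes $\binom{\ibs}{\ibs}\En[\kappabs^{\ibs}\tilde{\kappabs}^{\zerobs}]=\En[\kappabs^{\ibs}]$ using $\En[\tilde{\kappabs}^{\zerobs}]=1$ and the uncorrelation property (or simply \emph{a.1)} once $\tilde{\kappabs}^{\zerobs}=1$), and $\jbs=\zerobs$, which contributes $\En[\kappabs^{\zerobs}\tilde{\kappabs}^{\ibs}]=\En[\tilde{\kappabs}^{\ibs}]$. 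Summing the two surviving contributions yields \eqref{(sum)}.

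The only point requiring a little care — and the place where one must invoke the normalization $\En[\kappabs^{\ms\zerobs}]=1$ explicitly — is the handling of the boundary terms $\jbs=\ibs$ and $\jbs=\zerobs$: there the ``mixed monomial'' degenerates to a pure power in one of the two tuples times the constant $1$, so \emph{b.1)} does \emph{not} kill it, and instead \emph{a.1)} (together with the convention $\En[\kappabs^{\ms\zerobs}]=1$) identifies its value. One should also remark that if some $i_{\ms s}=0$ the corresponding coordinate simply does not appear, which is consistent with the formula. Everything else is the routine multi-index bookkeeping of the componentwise binomial expansion, so I do not anticipate any real obstacle; the statement is essentially a direct translation of the additivity of cumulants under independence into the present symbolic language.
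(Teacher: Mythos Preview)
Your argument is correct and is exactly the intended one: expand $(\kappabs+\tilde{\kappabs})^{\ibs}$ via the componentwise binomial theorem, kill all genuinely mixed monomials with property \emph{b.1)}, and identify the two surviving boundary terms via \emph{a.1)} together with the normalization $\En[\kappabs^{\zerobs}]=1$. The paper in fact states this lemma without proof, regarding it as an immediate consequence of the defining properties \emph{a.1)} and \emph{b.1)}; your write-up simply makes explicit the routine verification that the paper leaves to the reader.
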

\begin{definition}
If $\En \left[ \kappabs^{\ms \ibs} \right] = \En \left[ (\kappabs^{\prime})^{\ms \ibs} \right]$ for all ${\ibs} \in {\mathbb N}_{\ms 0}^d,$ then $\kappabs$ and $\kappabs^{\prime}$ are said to be similar, in symbols $\kappabs \equiv \kappabs^{\prime}.$ 
\end{definition}
Note that there always exists two (or more) $d$-tuples of similar umbrae $\kappabs$ and $\kappabs^{\prime},$ having $\{c_{\ibs}\}$ as the cumulant sequence.

\begin{definition} \label{(cgfdef)}
The gf of $\kappabs$ is $f(\kappabs, \zbs) = \sum_{\ibs \geq \zerobs} c_{\ibs} \frac{\zbs^{\ibs}}{\ibs!} \in {\mathbb C}[[\zbs]].$
\end{definition}
Note that the gf of $\kappabs$ is such that $f(\kappabs, \zerobs)=1$ differently from the cgf of a random vector such that $K(\zerobs)=0.$ Moreover, if $\kappabs$ and $\tilde{\kappabs}$ are uncorrelated $d$-tuples then $f(\kappabs + \tilde{\kappabs}, \zbs)=f(\kappabs, \zbs) + f(\kappabs^{\prime}, \zbs)$ and $\kappabs \equiv \tilde{\kappabs}$ if and only if $f(\kappabs, \zbs)=f(\kappabs^{\prime}, \zbs).$

\begin{example}[Multivariate Gaussian random vector] \label{Gauss}
{\rm Assume $\Xbs$ a multivariate Gaussian random vector with mean $\mbs$ and full rank covariance matrix $\Sigma,$ that is $\Xbs \sim N({\mathbf m}, \Sigma).$
The symbolic counterpart of $\Xbs$ is the $d$-tuple $\kappabs_{\smallX}$ with gf $f(\kappabs_{\smallX}, \zbs) = 1 + \langle \mbs, \zbs \rangle + \frac{1}{2} \langle \zbs, \zbs \Sigma \rangle,$ with $\langle \cdot , \cdot  \rangle$ the usual scalar product of vectors.}
\end{example}
\subsection{Generalized umbral sum}
%
Convolutions of independent r.v's correspond to a summation of cumulant sequences; random summations of independent r.v.'s corresponds to the composition of cumulant sequences, and generalize convolutions. In this section we generalize random summations to the new umbral setting.

We first recall a combinatorial tool introduced in \cite{Faa}, paralleling the notion of integer partition for multi-indexes. A partition $\lambdabs$ of a multi-index $\ibs$ is a matrix $\lambdabs = (\lambda_{\ms rs}) \vdash \ibs$ of non-negative integers and with no zero columns, in lexicographic order, such that $\lambda_{\ms r1}+\lambda_{\ms r2}+\cdots+\lambda_{\ms rk}=i_r$ for $r=1,2,\ldots,d.$ As for integer partitions, the notation $\lambdabs = (\lambdabs_{\ms 1}^{\ms r_{\ms 1}}, \lambdabs_{\ms 2}^{r_{\ms 2}}, \ldots)$ denotes a matrix $\lambdabs$ with $r_{\ms 1}$ columns equal to $\lambdabs_{\ms 1}$, $r_{\ms 2}$ columns equal to $\lambdabs_{\ms 2}$ and so on, with $\lambdabs_{\ms 1} < \lambdabs_{\ms 2} < \cdots$. The integer $r_i$ is called multiplicity of $\lambdabs_{\ms i}.$ Set $\mf(\lambdabs)=(r_{\ms 1}, r_{\ms 2},\ldots).$ The number of columns of $\lambdabs$ is denoted by $\lf(\lambdabs).$
For example if $\ibs = (2,1)$ then
\begin{equation}
\lambdabs = \left\{ \binom{2}{1}, \binom{0 \,\, 2}{1 \,\, 0}, \binom{1 \,\, 1}{0 \,\, 1}, \binom{0 \,\, 1 \,\, 1}{1 \,\, 0 \,\, 0} \right\}.
\label{(lambda1)}
\end{equation}
\begin{remark}{\rm A partition $\lambdabs \vdash \ibs$ is a different way to encode multiset partitions with multiplicities $\ibs.$ 
For example, if $M = \{ \underbrace{\mu_1, \mu_1}_2, \underbrace{\mu_2}_1\}$ with $\ibs=(2,1)$ then 
$$\Bigl\{\{\mu_1, \mu_1, \mu_2\}\Bigr\}, \Bigl\{\{\mu_1, \mu_1\}, \{\mu_2\}\Bigr\}, \Bigl\{\{\mu_1, \mu_2\}, \{\mu_1\}\Bigr\}, \Bigl\{\{\mu_1\}, \{\mu_1\}, \{\mu_2\}\Bigr\}$$
are partitions of $M$ whose multiplicities correspond to the different partitions  $\lambdabs$ of $\ibs$ in (\ref{(lambda1)}).}
\end{remark}

Assume $\Sbs_N = \Xbs_1 + \cdots + \Xbs_N$ a sum of indipendent random vectors i.d. to $\Xbs,$ indexed by a non-negative integer-valued r.v. $N.$ In the following the cumulant sequence of $\Xbs$ is denoted by $\{c_{\ibs}\},$ if there exists. Otherwise we refer to the umbra $\kappabs_{\smallX}$ representing the sequence  $\{c_{\ibs}\}$ through the evaluation $\En.$
The same holds for the index $N.$
\begin{lemma}\label{multcomp}
If $\{h_{\ms \ibs}\}$ is the cumulant sequence of a random sum $\Sbs_{\ms N},$ $\{g_{\ms k}\}$ is the cumulant sequence of $N$
and $\{c_{\ibs}\}$ is the cumulant sequence\footnote[4]{When necessary, we explicitly write $c_{\ibs}=c_{\ibs}(\Xbs).$} of $\Xbs$ then
\begin{equation}
h_{\ms \ibs} = \ibs! \sum_{\lambdabs \vdash \ibs} \frac{g_{\ms \lf(\lambdabs)}}{\mf(\lambdabs)! \lambdabs!} \, \prod_{j} c_{\lambdabs_j}^{r_j},
\label{(detsum1)}
\end{equation}
where the summation is over all partitions $\lambdabs$ of the multi-index $\ibs.$
\end{lemma}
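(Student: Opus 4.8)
The plan is to translate the probabilistic identity \eqref{randomsum} into the language of exponential generating functions and then extract coefficients via the multivariate Fa\`a di Bruno formula recalled above. Since $\Sbs_{\ms N}$ is a random sum of i.i.d.\ random vectors distributed as $\Xbs$ indexed by a non-negative integer-valued r.v.\ $N$, the cumulant generating function of $\Sbs_{\ms N}$ is the composition $K_{\Sbs_{\ms N}}(\zbs) = K_{\ms N}\bigl(K_{\Xbs}(\zbs)\bigr)$, where $K_{\ms N}$ is the (univariate) cgf of $N$ and $K_{\Xbs}$ is the (multivariate) cgf of $\Xbs$. Writing these cgf's as formal power series, $K_{\Xbs}(\zbs) = \sum_{\ibs > 0} c_{\ibs} \zbs^{\ibs}/\ibs!$ and $K_{\ms N}(t) = \sum_{k \geq 1} g_{\ms k}\, t^{k}/k!$, the function $t \mapsto K_{\ms N}(t)$ is composed with the single delta series $f(\zbs) = K_{\Xbs}(\zbs)$ (which is indeed a delta series since $f(\zerobs)=0$), so \eqref{(faamult)} applies in the special case $d=1$ on the outer side.

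First I would write $h_{\ms \ibs}$ as the coefficient $\ibs!\,[\zbs^{\ibs}]\,K_{\ms N}(K_{\Xbs}(\zbs))$ and expand $K_{\ms N}$ as a sum over $k \geq 1$ of $g_{\ms k} K_{\Xbs}(\zbs)^{k}/k!$. The core combinatorial step is then to compute $[\zbs^{\ibs}]\, K_{\Xbs}(\zbs)^{k}$: a $k$-th power of the series $\sum_{\ibs>0} c_{\ibs}\zbs^{\ibs}/\ibs!$ expands, by the multinomial theorem for formal power series, into a sum over ordered $k$-tuples of multi-indexes $(\ibs^{(1)}, \ldots, \ibs^{(k)})$ with each $\ibs^{(s)} > \zerobs$ and $\sum_s \ibs^{(s)} = \ibs$, weighted by $\prod_s c_{\ibs^{(s)}}/\ibs^{(s)}!$. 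Grouping these ordered tuples according to the unordered multiset they form — which is exactly a partition $\lambdabs \vdash \ibs$ with $\lf(\lambdabs) = k$ columns in the notation recalled before the lemma — introduces the multinomial count $k!/\mf(\lambdabs)!$ of orderings of the columns of $\lambdabs$, and the product $\prod_s c_{\ibs^{(s)}}/\ibs^{(s)}!$ becomes $\prod_j c_{\lambdabs_j}^{r_j} / (\lambdabs!)$ where $\lambdabs! = \prod_j (\lambdabs_j!)^{r_j}$ and $\mf(\lambdabs)=(r_1, r_2, \ldots)$. Substituting back, the $1/k!$ from the exponential expansion cancels the $k!$ from the column-ordering count, the constraint $\lf(\lambdabs)=k$ removes the sum over $k$ in favor of a single sum over all $\lambdabs \vdash \ibs$ with $g_{\ms \lf(\lambdabs)}$ in place of $g_{\ms k}$, and multiplying by $\ibs!$ yields precisely \eqref{(detsum1)}.

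The one subtlety, which I would address explicitly, is the bookkeeping of the combinatorial factors: one must check that the number of ways to arrange a multiset of columns $\lambdabs = (\lambdabs_1^{r_1}, \lambdabs_2^{r_2}, \ldots)$ into an ordered tuple is $\lf(\lambdabs)!/\mf(\lambdabs)! = \lf(\lambdabs)!/(r_1! r_2! \cdots)$, and that no further symmetry factor is needed because the multi-indexes $\ibs^{(s)}$ already record the data faithfully (distinct partitions $\lambdabs$ of $\ibs$ correspond to disjoint sets of ordered tuples, matching the multiset-partition interpretation in the Remark above). This is the analogue, for multi-indexes, of the classical grouping of compositions into partitions that underlies the univariate Fa\`a di Bruno formula, so the main obstacle is purely notational rather than conceptual. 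Alternatively, and perhaps more cleanly for the write-up, one can invoke the multivariate Fa\`a di Bruno formula of \cite{Faa} directly on the composition $K_{\ms N} \circ (K_{\Xbs,1}, \ldots)$ with all inner components equal to the same series $K_{\Xbs}$, and simply read off \eqref{(detsum1)} as the specialization; I would present the hands-on multinomial expansion as the proof and remark on this shortcut.
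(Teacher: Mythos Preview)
Your proposal is correct and follows essentially the same route as the paper: compose the cgf of $N$ with that of $\Xbs$, expand the outer series as $\sum_{k \geq 1} g_k\,[\,\cdot\,]^k/k!$, and extract the $\zbs^{\ibs}$-coefficient of the $k$-th power by grouping ordered tuples of multi-indexes into multi-index partitions $\lambdabs \vdash \ibs$ with $\lf(\lambdabs)=k$. The paper phrases the composition in its umbral gf notation $f[\kappa_{\scriptstyle N}, f(\kappabs_{\smallX},\zbs)-1]$ (shifting by $1$ because those gf's have constant term $1$) and writes the inner expansion compactly as $\sum_{\lambdabs \vdash \ibs,\ \lf(\lambdabs)=j} \binom{\ibs}{\lambdabs} c_{\lambdabs}$, but this is exactly the multinomial step you spell out in detail; your explicit bookkeeping of the $\lf(\lambdabs)!/\mf(\lambdabs)!$ factor is precisely the ``suitable rearranging'' the paper invokes at the end.
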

\begin{proof}
Assume $f(\kappabs_{\ms \Sbs_{\scriptscriptstyle\smallN}},\zbs), f(\kappa_{\scriptstyle\smallN}, z)$ and $f(\kappabs_{\smallX}, \zbs)$ the gf's of umbral monomials corresponding to $\Sbs_{\ms N}, N$ and $\Xbs$ respectively. According to (\ref{(faamult)}), in the composition of $f(\kappa_{\scriptstyle\smallN}, z)$ with $f(\kappabs_{\smallX}, \zbs)$ the inner power series has to be a delta series, so that
\begin{eqnarray}
f(\kappabs_{\ms \Sbs_{\scriptscriptstyle\smallN}},\zbs) & = & f[\kappa_{\scriptstyle\smallN}, f(\kappabs_{\smallX}, \zbs)-1] = 1 + \sum_{j \geq 1} \frac{g_j}{j!} \left[ f(\kappabs_{\smallX},\zbs) - 1 \right]^j \label{(comp)} \\
& = & 1 + \sum_{j \geq 1} \frac{g_j}{j!} \left\{ \sum_{\ibs > 0} \left[ \sum_{\lambdabs \vdash \ibs \atop {\lf}(\lambdabs) = j}
\binom{\ibs}{\lambdabs} c_{\lambdabs} \right] \frac{\zbs^{\ibs}}{\ibs!} \right\}.
\label{(cumsumrandom)}
\end{eqnarray}
The result follows by suitable rearranging the terms in  (\ref{(cumsumrandom)}).
\end{proof}
An algorithm to implement in {\tt Maple} equation (\ref{(detsum1)}) is available in \cite{UMFB}. In the following, instead of using $\kappabs_{\ms \Sbs_{\scriptscriptstyle\smallN}},$ we introduce the symbol  $\kappa_{{\scriptstyle\smallN}} \punt \kappabs_{\smallX}.$ 
\begin{definition} \label{def1}
The umbra $\kappa_{{\scriptstyle\smallN}} \punt \kappabs_{\smallX}$ is called umbral sum.
\end{definition}
If $P(N=n)=1,$ then $\kappa_{{\scriptstyle n}}$ represents the sequence $\{1,n,0,0,\ldots\}$ and $\kappa_{{\scriptstyle n}} \punt \kappabs_{\smallX}$ represents the cumulant sequence of $S_n.$ The umbral sum is not commutative and the left-hand-side unity is the umbra $\kappa_{u}$ representing the sequence $\{1,1,0,0,\ldots\}.$ Indeed from (\ref{(comp)}) we have
$\kappa_{u} \punt \kappabs_{\ms {\smallX}} \equiv \kappabs_{\ms {\smallX}}.$

Following Example \ref{1.1}, the advantage of the symbolic method consists in replacing the r.v. $N$ by a suitable umbra, allowing us to work with random sums not necessarily indexed by a non-negative integer-valued r.v.
\begin{definition} \label{def1}
The umbra $\kappa_{\ms \alpha} \punt \kappabs_{\smallX}$ is called generalized umbral sum.
\end{definition}
The umbra $\kappa_{\ms \alpha} \punt \kappabs_{\smallX}$ is a generalization of a random sum since its gf is a composition of gf's, as next proposition shows
\begin{lemma} \label{7}
$f(\kappa_{\ms \alpha} \punt \kappabs_{\ms {\smallX}},\zbs) = f\left[\kappa_{\ms \alpha}, f(\kappabs_{\ms {\smallX}},\zbs)-1\right] = \En \left( \exp \left\{\kappa_{\ms \alpha} [f(\kappabs_{{\smallX}}, \zbs) - 1] \right\}\right).$
\end{lemma}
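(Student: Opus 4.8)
The plan is to reduce the claimed identity to the already-established equation (\ref{(comp)}) together with the definitions of the dot-product construction and of the generating function of an umbra. First I would recall that, by the construction in Example \ref{1.1} carried over to the multivariate cumulant setting, the umbra $\kappa_{\ms \alpha} \punt \kappabs_{\ms {\smallX}}$ is defined exactly so that its sequence of cumulants is obtained from the sequence $\{h_{\ms \ibs}(n)\}$ of cumulants of $\kappa_{{\scriptstyle n}} \punt \kappabs_{\smallX}$ (a polynomial in $n$, by Lemma \ref{multcomp}) by replacing $n$ with the umbra $\alpha$ under the evaluation $\En$. Hence $f(\kappa_{\ms \alpha} \punt \kappabs_{\ms {\smallX}},\zbs)$ is obtained from $f(\kappa_{{\scriptstyle n}} \punt \kappabs_{\smallX},\zbs)$ by the same replacement, applied coefficientwise.

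Next I would take the right-hand side of (\ref{(comp)}) in the form it has when $N$ is the deterministic index $n$, namely $f(\kappa_{{\scriptstyle n}} \punt \kappabs_{\smallX},\zbs) = 1 + \sum_{j \geq 1} \frac{(n)_{\ms \cdot}}{j!}[f(\kappabs_{\smallX},\zbs)-1]^j$ with the appropriate falling-factorial/cumulant coefficients of the deterministic index; more transparently, since $\kappa_{{\scriptstyle n}}$ represents $\{1,n,0,0,\ldots\}$ one may simply write $f(\kappa_{{\scriptstyle n}} \punt \kappabs_{\smallX},\zbs) = \exp\{n[f(\kappabs_{\smallX},\zbs)-1]\}$, because the inner series $f(\kappabs_{\smallX},\zbs)-1$ is a delta series and $h(t)=\exp(t)$ is the cgf-type outer series attached to the index. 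Expanding $\exp\{n[f(\kappabs_{\smallX},\zbs)-1]\}$ as $\sum_{j\geq 0} n^j [f(\kappabs_{\smallX},\zbs)-1]^j/j!$ exhibits each coefficient of $\zbs^{\ibs}$ as a polynomial in $n$ with only finitely many terms, so the formal substitution $n \mapsto \alpha$ is legitimate in ${\mathbb C}[[\zbs]]$.

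Performing that substitution and applying $\En$ coefficientwise gives $f(\kappa_{\ms \alpha} \punt \kappabs_{\ms {\smallX}},\zbs) = \sum_{j\geq 0} \En[\alpha^j]\, [f(\kappabs_{\smallX},\zbs)-1]^j/j! = \En\big(\exp\{\alpha[f(\kappabs_{\smallX},\zbs)-1]\}\big)$, where in the last step $\En$ is pulled through the (coefficientwise finite) sum by linearity and the power series in $\alpha$ is recognized as $\exp$. Recognizing $\sum_{j\geq 0}\En[\alpha^j] w^j/j!$ as $f(\kappa_{\ms \alpha},w)$ evaluated at $w=f(\kappabs_{\smallX},\zbs)-1$ — which is exactly $\En[\exp(\kappa_{\ms \alpha} w)]$ by Definition \ref{(cgfdef)} applied to the one-tuple $\kappa_{\ms \alpha}$ — yields $f[\kappa_{\ms \alpha}, f(\kappabs_{\ms {\smallX}},\zbs)-1]$, completing the chain of equalities. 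The only point needing care, and the main (mild) obstacle, is the bookkeeping that justifies exchanging $\En$ with the infinite sum over $j$: one must observe that $f(\kappabs_{\smallX},\zbs)-1$ has no constant term, so for each fixed multi-index $\ibs$ only finitely many $j$ contribute to the coefficient of $\zbs^{\ibs}$, making every manipulation a finite one and hence valid in the formal power series ring, as flagged in the footnote on ${\mathbb C}[[\zbs]]$.
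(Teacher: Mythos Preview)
Your detour through the deterministic index $n$ contains a genuine error. Since $\kappa_n$ represents the sequence $\{1,n,0,0,\ldots\}$, its generating function (Definition~\ref{(cgfdef)}) is $f(\kappa_n,z)=1+nz$, so plugging $g_1=n$, $g_j=0$ for $j\geq 2$ into (\ref{(comp)}) gives $f(\kappa_{n} \punt \kappabs_{\smallX},\zbs)=1+n[f(\kappabs_{\smallX},\zbs)-1]$ (this is exactly Corollary~\ref{1}), not $\exp\{n[f(\kappabs_{\smallX},\zbs)-1]\}$. What you wrote is the \emph{moment} generating function of $S_n$, whereas the $f$ of this section is built from cumulants. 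Worse, the cumulants of $\kappa_{n} \punt \kappabs_{\smallX}$ are $n\,c_{\ibs}$, linear in $n$; carrying out the Example~\ref{1.1} style substitution $n\mapsto\kappa_{\alpha}$ coefficientwise on that correct linear expression would yield $\En[\kappa_{\alpha}]\,c_{\ibs}=g_1 c_{\ibs}$, which is the wrong sequence. So the deterministic-$n$ route neither produces the right $f$ nor defines the generalized umbral sum; you reach the correct final formula only because two errors happen to cancel.

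The paper's proof bypasses $n$ entirely: the generalized umbral sum is introduced by replacing the random $N$ with an umbra, i.e.\ by taking $g_j=\En[\kappa_{\alpha}^j]$ in (\ref{(detsum1)}), and then (\ref{(comp)})--(\ref{(cumsumrandom)}) already identify those $h_{\ibs}$ as the coefficients of $f[\kappa_{\alpha},f(\kappabs_{\smallX},\zbs)-1]=1+\sum_{j\geq 1}\En[\kappa_{\alpha}^j]\,[f(\kappabs_{\smallX},\zbs)-1]^j/j!$. Your final paragraph then gives the second equality of the lemma (with $\kappa_{\alpha}$ in place of your $\alpha$, and with your correct observation that only finitely many $j$ contribute to each $\zbs^{\ibs}$) directly from Definition~\ref{(cgfdef)}. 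In short: drop the deterministic-$n$ step, invoke (\ref{(cumsumrandom)}) with $g_j=\En[\kappa_{\alpha}^j]$, and keep your last paragraph.
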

\begin{proof}
The result follows from (\ref{(cumsumrandom)}) as $g_j= \En(\kappa_{\ms \alpha}^{\ms j})$ for $j \geq 1.$
\end{proof}
Note that the last equality in Lemma \ref{7} allows us to deal with the gf of a generalized umbral sum as it was the exponential of a formal power series.
Indeed if a conditional evaluation is considered dealing with $\kappa_{\ms \alpha}$ as it was a constant, the
generalized umbral sum $\kappa_{\ms \alpha} \punt \kappabs_{\ms {\smallX}}$ has $\exp \left\{\kappa_{\ms \alpha} [f(\kappabs_{{\smallX}}, \zbs) - 1] \right\}$ as gf.
\begin{corollary} \label{1}
The auxiliary umbra $\kappa_{\ms n} \punt \kappabs_{\smallX}$ represents the sequence $\{n c_{\ibs}\}$ with $f(\kappa_{\ms n} \punt \kappabs_{\smallX},\zbs) = 1 + n [f(\kappabs_{\smallX}, \zbs) - 1].$
\end{corollary}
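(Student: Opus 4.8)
The plan is to get the statement as a one‑line specialization of Lemma \ref{7}, followed by reading off coefficients. First I would record the generating function of $\kappa_{\ms n}$: by construction $\kappa_{\ms n}$ represents the sequence $\{1,n,0,0,\ldots\}$, so Definition \ref{(cgfdef)} gives the polynomial $f(\kappa_{\ms n},z) = 1 + n z$ in the single indeterminate $z$. This is the only ingredient specific to $\kappa_{\ms n}$; everything else is general.

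Next, apply Lemma \ref{7} with the umbra $\kappa_{\ms \alpha}$ taken to be $\kappa_{\ms n}$. Since the constant term of $f(\kappabs_{\smallX},\zbs)$ equals $1$, the series $f(\kappabs_{\smallX},\zbs)-1$ is a delta series, so the composition
\[
f(\kappa_{\ms n} \punt \kappabs_{\smallX},\zbs) = f\!\left[\kappa_{\ms n}, f(\kappabs_{\smallX},\zbs)-1\right]
\]
is well defined. Substituting $t = f(\kappabs_{\smallX},\zbs)-1$ into $f(\kappa_{\ms n},t)=1+n t$ immediately yields $f(\kappa_{\ms n}\punt\kappabs_{\smallX},\zbs) = 1 + n\left[f(\kappabs_{\smallX},\zbs)-1\right]$, which is the asserted identity. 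Expanding, with $f(\kappabs_{\smallX},\zbs) = 1 + \sum_{\ibs>0} c_{\ibs}\,\zbs^{\ibs}/\ibs!$, one gets $f(\kappa_{\ms n}\punt\kappabs_{\smallX},\zbs) = 1 + \sum_{\ibs>0} n c_{\ibs}\,\zbs^{\ibs}/\ibs!$, so by Definition \ref{(cgfdef)} the $\ibs$‑th cumulant of $\kappa_{\ms n}\punt\kappabs_{\smallX}$ is $n c_{\ibs}$ for every $\ibs>\zerobs$; that is, $\kappa_{\ms n}\punt\kappabs_{\smallX}$ umbrally represents $\{n c_{\ibs}\}$.

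As an alternative route one can invoke Lemma \ref{multcomp} directly: choosing the index with $P(N=n)=1$ makes $g_{\ms 1}=n$ and $g_{\ms k}=0$ for $k\geq 2$, so in (\ref{(detsum1)}) only the partition $\lambdabs=(\ibs)$ consisting of a single column survives; for it $\lf(\lambdabs)=1$, $\mf(\lambdabs)!=1$, $\lambdabs!=\ibs!$ and $\prod_j c_{\lambdabs_j}^{r_j}=c_{\ibs}$, giving $h_{\ms \ibs} = \ibs!\,(n/\ibs!)\,c_{\ibs} = n c_{\ibs}$, in agreement with the above.

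There is essentially no serious obstacle; the only point needing a little care is the bookkeeping of the constant term. The generating function of an umbra has constant term $1$ rather than $0$, so the phrase "represents the sequence $\{n c_{\ibs}\}$" must be read with the usual convention that the $\zerobs$‑th entry stays equal to $1$, while all the genuine cumulants ($\ibs>\zerobs$) are scaled by $n$ — which is exactly the additivity/homogeneity behaviour expected for $\Sbs_n = \Xbs_1 + \cdots + \Xbs_n$.
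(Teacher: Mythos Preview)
Your argument is correct and is exactly the intended one: the paper states this result as an immediate corollary of Lemma~\ref{7}, and your specialization $f(\kappa_{\ms n},z)=1+nz$ followed by substitution is precisely how it is meant to be read off. The alternative check via Lemma~\ref{multcomp} with $g_1=n$, $g_k=0$ for $k\geq 2$ is a nice redundancy but not needed.
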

Assume to replace ${\mathbb C}[\A]$ with ${\mathbb C}[y][\A].$ Thanks to this device, sequences of polynomials $\{p_n(y)\}$ can be considered as cumulants of what we call {\it polynomial umbrae}. As example, let us consider the umbra $\kappa_{\ms y u},$ denoted by $\kappa_{\ms y}$ to simplify notation. This polynomial umbra represents the sequence $\{1,y,0,0,\ldots\}$ and its r.v. counterpart is $Y$ such that $P(Y=y)=1.$
\begin{corollary} \label{2}
The auxiliary umbra $\kappa_{\ms y} \punt \kappabs_{\smallX}$ represents the sequence $\{y c_{\ibs}\}$ with $f(\kappa_{\ms y}
\punt \kappabs_{\smallX},\zbs) = 1 + y \, [f(\kappabs_{\smallX}, \zbs) - 1].$
\end{corollary}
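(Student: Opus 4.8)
The plan is to derive Corollary \ref{2} directly from Corollary \ref{1} by means of the polynomial-umbra device just introduced, so that no fresh computation with the Fa\`a di Bruno expansion is needed. Recall that after enlarging the ground ring from ${\mathbb C}[\A]$ to ${\mathbb C}[y][\A]$, the umbra $\kappa_{\ms y}$ represents the sequence $\{1,y,0,0,\ldots\}$ exactly as $\kappa_{\ms n}$ represents $\{1,n,0,0,\ldots\}$, the only difference being that the scalar $n \in {\mathbb N}_{\ms 0}$ is replaced by the indeterminate $y$. Since every identity established for $\kappa_{\ms n}$ in Corollary \ref{1} is polynomial in $n$ and uses only the evaluation rules \textit{a.1)} and \textit{b.1)} together with Lemma \ref{7}, it remains valid verbatim when $n$ is formally replaced by $y$; this is the standard \emph{polynomial argument} of umbral calculus.

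Concretely, I would proceed in three short steps. First, invoke Lemma \ref{7} with $\alpha$ chosen so that $\kappa_{\ms \alpha}=\kappa_{\ms y}$, giving
\[
f(\kappa_{\ms y} \punt \kappabs_{\smallX},\zbs) = f\left[\kappa_{\ms y}, f(\kappabs_{\smallX},\zbs)-1\right] = \En\!\left( \exp\left\{\kappa_{\ms y}\,[f(\kappabs_{\smallX},\zbs)-1]\right\}\right).
\]
Second, use that $\kappa_{\ms y}$ represents $\{1,y,0,0,\ldots\}$: expanding the exponential, the evaluation $\En[\kappa_{\ms y}^{\,j}]$ equals $1$ for $j=0$, equals $y$ for $j=1$, and vanishes for $j\ge 2$, so that only the linear term survives and
\[
f(\kappa_{\ms y} \punt \kappabs_{\smallX},\zbs) = 1 + y\,[f(\kappabs_{\smallX},\zbs)-1].
\]
Third, read off the coefficients: since $f(\kappabs_{\smallX},\zbs)=\sum_{\ibs \geq \zerobs} c_{\ibs}\,\zbs^{\ibs}/\ibs!$ with $c_{\zerobs}=1$, subtracting $1$ kills the constant term and multiplying by $y$ scales every remaining coefficient, whence $\En[(\kappa_{\ms y} \punt \kappabs_{\smallX})^{\ibs}] = y\,c_{\ibs}$ for all $\ibs > \zerobs$, while the $\ibs=\zerobs$ term is $1$ as required of a gf in the sense of Definition \ref{(cgfdef)}. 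This establishes both assertions of the corollary.

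There is essentially no obstacle here: the statement is the ``polynomial lift'' of Corollary \ref{1}, and the only point deserving a word of care is the legitimacy of substituting the indeterminate $y$ for the integer $n$ — that is, checking that passing from ${\mathbb C}[\A]$ to ${\mathbb C}[y][\A]$ does not disturb the evaluation axioms, and that the polynomial umbra $\kappa_{\ms y}$ indeed has moment sequence $\{1,y,0,0,\ldots\}$, which is exactly the definition given just before the statement. One could equivalently argue that $\En[(\kappa_{\ms n} \punt \kappabs_{\smallX})^{\ibs}] = n\,c_{\ibs}$ holds for infinitely many integers $n$, hence the two sides, being polynomials in the formal parameter, agree identically, so the identity persists with $n$ replaced by $y$. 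Either route is immediate once Lemma \ref{7} and Corollary \ref{1} are in hand.
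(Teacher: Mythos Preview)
Your proof is correct and matches the paper's intended approach: the paper states Corollary~\ref{2} without proof, immediately after Corollary~\ref{1} and the paragraph introducing the polynomial umbra $\kappa_{\ms y}$, so it is meant to be read exactly as you read it---as the polynomial lift of Corollary~\ref{1}, obtained by applying Lemma~\ref{7} with $\kappa_{\ms \alpha}=\kappa_{\ms y}$ and using that $\En[\kappa_{\ms y}^{\,j}]$ vanishes for $j\ge 2$.
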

More general fields ${\mathbb C}[y_1, \ldots, y_n]$ can be considered, as done in the Section $4.$
\section{Cumulant polynomial sequences}
The sequence $\{y c_{\ibs}\} \in {\mathbb C}[y]$ in Corollary \ref{2} is an example of polynomial sequence that can be represented by a polynomial umbra. Polynomial sequences having a more general structure are represented as follows.
\begin{definition} \label{BCP}
If $\Xbs$ is a random vector with $\ibs$-th cumulant $c_{\ibs},$ then the $\ibs$-th cumulant polynomial of $\Xbs$ is
\begin{equation}
\C_{\ibs, \smallX}(y) = \ibs! \, \sum_{\lambdabs \vdash \ibs} \frac{y^{\ms \lf(\lambdabs)}}{\mf(\lambdabs)! \lambdabs!}
 \prod_{j} c_{\lambdabs_j}^{r_j}.
\label{(detsum)}
\end{equation}
\end{definition}
If $\ibs=(2,1)$ from (\ref{(lambda1)}), we have $\C_{\ms {2,1}}(y) = y^3 c_{\ms {0,1}} c^2_{\ms {1,0}} + 2 \, y^2 \, c_{\ms {1,0}} c_{\ms {1,1}} + y^2 c_{\ms {0,1}} c_{\ms {2,0}} + y c_{\ms {2,1}}.$ The name {\it cumulant polynomial} depends on the coefficients involving a sequence of cumulants. In particular  $\C_{\ibs, \smallX}(0)=0,$ since $\lf(\lambdabs) >0$ for all partition $\lambdabs$ of a multi-index.   Cumulant polynomials $\C_{\ibs, \smallX}(y)$ are a generalization of complete Bell (exponential) polynomials \cite{Bernoulli}.

\begin{lemma} \label{3.3}
If $K_{\Xbs}(\zbs)$ is the cgf of $\Xbs,$ then $\sum_{\ibs \geq 0} \C_{\ibs, \smallX}(y) \frac{\zbs^{\ibs}}{\ibs!} = \exp\left\{ y K_{\Xbs}(\zbs) \right\}.$
\end{lemma}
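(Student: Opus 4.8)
The plan is to recognize the right-hand side $\exp\{y K_{\Xbs}(\zbs)\}$ as a special case of the generalized umbral sum already analyzed in Lemma~\ref{7}, and then to read off the coefficients via the formula of Lemma~\ref{multcomp} (equivalently, Definition~\ref{BCP}).

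First I would observe that $K_{\Xbs}(\zbs)$ is, by definition, the cumulant generating function of $\Xbs$, so that $f(\kappabs_{\smallX},\zbs) = 1 + K_{\Xbs}(\zbs)$; in particular $f(\kappabs_{\smallX},\zbs)-1 = K_{\Xbs}(\zbs)$ is a delta series, which is exactly what is needed for the composition in Lemma~\ref{7} to be well defined. Then by Corollary~\ref{2}, the generalized umbral sum $\kappa_{\ms y}\punt\kappabs_{\smallX}$ has generating function
\[
f(\kappa_{\ms y}\punt\kappabs_{\smallX},\zbs) = 1 + y\,[f(\kappabs_{\smallX},\zbs)-1] = 1 + y\,K_{\Xbs}(\zbs),
\]
and since $\kappa_{\ms y}$ represents the sequence $\{1,y,0,0,\ldots\}$, Lemma~\ref{7} gives
\[
f(\kappa_{\ms y}\punt\kappabs_{\smallX},\zbs) = \En\!\left(\exp\left\{\kappa_{\ms y}\,[f(\kappabs_{\smallX},\zbs)-1]\right\}\right) = \exp\{y\,K_{\Xbs}(\zbs)\},
\]
the last equality because here $\kappa_{\ms y}$ behaves as the scalar $y$ under $\En$ (its only nonzero power beyond the zeroth is the first, with value $y$). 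This already identifies the right-hand side of the claimed identity as the generating function of $\kappa_{\ms y}\punt\kappabs_{\smallX}$.

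It then remains to match coefficients: I would apply Lemma~\ref{multcomp} with the index $N$ replaced by the polynomial umbra $\kappa_{\ms y}$, so that $g_k = \En[\kappa_{\ms y}^{\ms k}]$ equals $y$ for $k=1$ and $0$ for $k\geq 2$. Actually $g_{\lf(\lambdabs)}$ is nonzero only when $\lf(\lambdabs)\geq 1$, which always holds for a partition of a multi-index, but the relevant point is simply that $g_{\lf(\lambdabs)} = y^{\lf(\lambdabs)}$ is \emph{not} what Lemma~\ref{multcomp} produces directly — rather, one should re-derive the coefficient extraction exactly as in the proof of Lemma~\ref{multcomp}, starting from $\exp\{yK_{\Xbs}(\zbs)\} = \sum_{j\geq 0}\frac{y^j}{j!}K_{\Xbs}(\zbs)^j$, expanding $K_{\Xbs}(\zbs)^j = \big(\sum_{\ibs>0}c_{\ibs}\zbs^{\ibs}/\ibs!\big)^j$ by the multivariate multinomial/partition expansion $\big[f(\kappabs_{\smallX},\zbs)-1\big]^j = \sum_{\ibs>0}\big[\sum_{\lambdabs\vdash\ibs,\ \lf(\lambdabs)=j}\binom{\ibs}{\lambdabs}c_{\lambdabs}\big]\zbs^{\ibs}/\ibs!$, and rearranging the double sum over $j$ and $\ibs$ into a single sum over partitions $\lambdabs\vdash\ibs$. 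Collecting the coefficient of $\zbs^{\ibs}/\ibs!$ and converting $\binom{\ibs}{\lambdabs}c_{\lambdabs}$ with the $1/j!$ weight into the grouped form $\frac{1}{\mf(\lambdabs)!\,\lambdabs!}\prod_j c_{\lambdabs_j}^{r_j}$ (the standard passage from column-indexed to multiplicity-indexed bookkeeping, as used between \eqref{(cumsumrandom)} and \eqref{(detsum1)}) yields precisely $\C_{\ibs,\smallX}(y)$ as in \eqref{(detsum)}.

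The only real bookkeeping obstacle is this last rearrangement: one must be careful that the factor $y^j$ with $j = \lf(\lambdabs)$ combines correctly with $1/j!$ and with the multinomial coefficients to give exactly the normalization $y^{\lf(\lambdabs)}/(\mf(\lambdabs)!\,\lambdabs!)$ appearing in \eqref{(detsum)} — but this is the same combinatorial identity already invoked in the proof of Lemma~\ref{multcomp} with $g_{\lf(\lambdabs)}$ specialized to $y^{\lf(\lambdabs)}$, so no new difficulty arises. Alternatively, and more cleanly, one can simply cite Lemma~\ref{multcomp} directly: taking $N$ to be the (formal) index whose cumulant sequence is $\{g_k\}$ with $g_k = y^k$ for all $k\geq 1$ — equivalently replacing $\kappa_{\scriptscriptstyle N}$ by $\kappa_{\ms y}$ inside the composition \eqref{(comp)}, where the $j$-th term carries $g_j/j! = y^j/j!$ — makes \eqref{(detsum1)} become \eqref{(detsum)} verbatim, and \eqref{(comp)} then reads $1 + \sum_{j\geq 1}\frac{y^j}{j!}[f(\kappabs_{\smallX},\zbs)-1]^j = \exp\{y\,[f(\kappabs_{\smallX},\zbs)-1]\} = \exp\{yK_{\Xbs}(\zbs)\}$, completing the proof.
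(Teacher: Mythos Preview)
Your final ``alternative'' paragraph is exactly the paper's proof: apply Lemma~\ref{multcomp} with the outer generating function $e^{yz}$ (i.e.\ $g_j = y^j$), so that \eqref{(detsum1)} specializes to \eqref{(detsum)} and the composition \eqref{(comp)} becomes $\exp\{y\,[f(\kappabs_{\smallX},\zbs)-1]\}=\exp\{yK_{\Xbs}(\zbs)\}$. That part is correct and complete.

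The detour through $\kappa_{\ms y}$, however, contains a genuine error. You write
\[
\En\!\left(\exp\left\{\kappa_{\ms y}\,[f(\kappabs_{\smallX},\zbs)-1]\right\}\right) = \exp\{y\,K_{\Xbs}(\zbs)\},
\]
justified by ``$\kappa_{\ms y}$ behaves as the scalar $y$ under $\En$''. It does not: by definition $\En[\kappa_{\ms y}^{\,j}]=0$ for $j\geq 2$, so $\En\big(\exp\{\kappa_{\ms y}\,t\}\big)=1+yt$, not $e^{yt}$. Indeed you had already computed, via Corollary~\ref{2}, that $f(\kappa_{\ms y}\punt\kappabs_{\smallX},\zbs)=1+yK_{\Xbs}(\zbs)$, so the two displayed equalities in that paragraph contradict each other. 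The umbra you actually want on the left of the dot is one with $\En[\kappa^{\,j}]=y^{\,j}$ for all $j\geq 1$, i.e.\ with generating function $e^{yz}$ (for instance $y\kappa_{\beta}$), not $\kappa_{\ms y}$ whose generating function is $1+yz$. Once you make that replacement, the argument via Lemma~\ref{7} collapses to the one-line application of Lemma~\ref{multcomp} that both you (at the end) and the paper give; so the $\kappa_{\ms y}$ discussion should simply be deleted.
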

\begin{proof}
The result follows from Lemma \ref{multcomp}, as $\C_{\ibs, \smallX}(y)$ are coefficients of the composition of
$e^{yz}$ and $K_{\Xbs}(\zbs).$
\end{proof}
\begin{remark}
{\rm If $\Xbs$ does not admit cgf, then we refer to formal cumulants  (\ref{(mgfcum)}), and instead of writing $\C_{\ibs, \smallX}(y),$ we write $\C_{\ibs, \mubs}(y)$ assuming the sequence $\{c_{\ibs}\}$ represented by $\kappabs_{\mubs} \equiv (\kappa_{\smalluno}, \ldots, \kappa_{d}).$ In Lemma \ref{3.3}, instead of using $K_{\Xbs}(\zbs)$ we refer to the formal power series $f(\kappabs_{\mubs},\zbs)-1.$}
\end{remark}
Next theorem shows that $\C_{\ibs, \smallX}(y)$ are moments of special multivariate stochastic processes. 
\begin{theorem} \label{3.2} If $\{\Xbs_y\}_{y \geq 0}$ is a multivariate stochastic process with independent and stationary increments, then
$\C_{\ibs, \smallX}(y) = E[\Xbs^{\ibs}_y].$
\end{theorem}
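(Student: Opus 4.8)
The plan is to identify the moment generating function of $\Xbs_y$ at a fixed time $y\geq 0$ and match it, coefficient by coefficient, with the exponential generating function computed in Lemma \ref{3.3}. First I would recall the standard fact that a stochastic process $\{\Xbs_y\}_{y\geq 0}$ with independent and stationary increments has, at each fixed $y$, a moment (equivalently cumulant) generating function that is linear in the time parameter: writing $K_{\Xbs_y}(\zbs)$ for the cgf of $\Xbs_y$, the increment structure gives $K_{\Xbs_{y+y'}}(\zbs)=K_{\Xbs_y}(\zbs)+K_{\Xbs_{y'}}(\zbs)$ for all $y,y'\geq 0$ (stationarity identifies the law of $\Xbs_{y+y'}-\Xbs_{y'}$ with that of $\Xbs_y$, and independence of increments turns the cgf of a sum into a sum of cgf's, together with $\Xbs_0=\zerobs$). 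Since $y\mapsto K_{\Xbs_y}(\zbs)$ is additive in $y$, on the formal level it must be of the form $y\,K(\zbs)$ for a single formal power series $K(\zbs)$, namely $K(\zbs)=K_{\Xbs_1}(\zbs)$, the cgf of the unit-time increment $\Xbs\mathrel{:=}\Xbs_1$.

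Next I would set $c_{\ibs}=c_{\ibs}(\Xbs)$, so that $K(\zbs)=K_{\Xbs}(\zbs)=\sum_{\ibs>0}c_{\ibs}\zbs^{\ibs}/\ibs!$, and write the moment generating function of $\Xbs_y$ as
\begin{equation}
\sum_{\ibs\geq 0} E[\Xbs_y^{\ibs}]\,\frac{\zbs^{\ibs}}{\ibs!} \;=\; \exp\bigl\{K_{\Xbs_y}(\zbs)\bigr\} \;=\; \exp\bigl\{y\,K_{\Xbs}(\zbs)\bigr\}.
\label{(proc-mgf)}
\end{equation}
By Lemma \ref{3.3} the right-hand side of \eqref{(proc-mgf)} equals $\sum_{\ibs\geq 0}\C_{\ibs,\smallX}(y)\,\zbs^{\ibs}/\ibs!$. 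Comparing the coefficient of $\zbs^{\ibs}/\ibs!$ on both sides yields $E[\Xbs_y^{\ibs}]=\C_{\ibs,\smallX}(y)$ for every multi-index $\ibs$, which is the assertion. One should note that for $\ibs=\zerobs$ both sides equal $1$, consistent with $\C_{\zerobs,\smallX}(y)=1$ coming from the empty partition.

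The step I expect to be the main obstacle — or at least the only one requiring care — is the first: rigorously justifying that the cgf of $\Xbs_y$ is \emph{exactly} $y$ times that of $\Xbs_1$ for all real $y\geq 0$, not merely for rational multiples. The additivity argument sketched above gives linearity along the integers and hence the rationals; upgrading to all nonnegative reals requires either an appeal to infinite divisibility of each $\Xbs_y$ plus a measurability/continuity-in-$y$ hypothesis (implicit in the phrase ``multivariate stochastic process with independent and stationary increments,'' i.e. a multivariate L\'evy-type process), or else one simply works entirely at the level of formal power series, treating $y$ as an indeterminate as in Corollary \ref{2} and the surrounding polynomial-umbra formalism, in which case $\C_{\ibs,\smallX}(y)\in{\mathbb C}[y]$ and the identity $E[\Xbs_y^{\ibs}]=\C_{\ibs,\smallX}(y)$ is read as an identity of polynomials in $y$, valid upon substituting any admissible value. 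I would adopt the formal viewpoint, remarking that when $y$ is a nonnegative integer the statement reduces to $\Xbs_y$ being distributed as the $y$-fold convolution $\Sbs_y=\Xbs_1+\cdots+\Xbs_y$, which is precisely the random sum $\kappa_{\ms y}\punt\kappabs_{\smallX}$ of Corollary \ref{2}, and the general case follows by the polynomial-umbra extension ${\mathbb C}[\A]\rightsquigarrow{\mathbb C}[y][\A]$.
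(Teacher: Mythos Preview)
Your proposal is correct and follows essentially the same approach as the paper: identify the mgf of $\Xbs_y$ as $\exp\{y\,K_{\Xbs_1}(\zbs)\}$ and then invoke Lemma \ref{3.3} to read off the coefficients. The paper simply cites \cite{Sato} for the form of $\phi_y(\zbs)$ rather than deriving it from additivity of the cgf as you do, and it does not pause over the rational-to-real extension you flag; your discussion of that point is a reasonable (and arguably more careful) elaboration, but it does not change the route.
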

\begin{proof}
For a multivariate stochastic process with independent and stationary increments \cite{Sato}, the moment generating function (mgf) $\phi_y(\zbs)$ is such that $\phi_y(\zbs) = \exp \{ y [K_{\Xbs_1}(\zbs)] \},$ with $K_{\Xbs_1}(\zbs)$ the cgf of $\Xbs_1.$ The result follows from
Definition \ref{BCP} and Lemma \ref{3.3}. 
\end{proof}
\begin{corollary} \label{3.5} If $\{\Xbs_1, \ldots, \Xbs_n\}$ are i.i.d. random vectors, then $\C_{\ibs, \smallX}(n) = E[(\Xbs_1 + \cdots + \Xbs_n)^{\ibs}].$  
\end{corollary}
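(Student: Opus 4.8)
The plan is to obtain the identity by specializing the generating-function formula of Lemma~\ref{3.3} to $y=n$ and then identifying the resulting series as a moment generating function. Since everything needed is already in hand, I would keep the argument short.

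First I would record the elementary fact that, because $\Xbs_1,\ldots,\Xbs_n$ are i.i.d.\ and i.d.\ to $\Xbs$, the cgf of $\Sbs:=\Xbs_1+\cdots+\Xbs_n$ is $K_{\Sbs}(\zbs)=n\,K_{\Xbs}(\zbs)$; this is just the additivity property of cumulants iterated $n-1$ times, i.e.\ the $\ibs$-th cumulant of $\Sbs$ equals $n\,c_{\ibs}$. Hence the mgf of $\Sbs$ is $\exp\{K_{\Sbs}(\zbs)\}=\exp\{n\,K_{\Xbs}(\zbs)\}$, whose coefficient of $\zbs^{\ibs}/\ibs!$ is by definition $E[\Sbs^{\ibs}]$.

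Next I would invoke Lemma~\ref{3.3} with $y$ replaced by the positive integer $n$, which gives $\sum_{\ibs\geq 0}\C_{\ibs,\smallX}(n)\,\zbs^{\ibs}/\ibs!=\exp\{n\,K_{\Xbs}(\zbs)\}$. The two power series so displayed coincide, so comparing the coefficient of $\zbs^{\ibs}/\ibs!$ yields $\C_{\ibs,\smallX}(n)=E[(\Xbs_1+\cdots+\Xbs_n)^{\ibs}]$. Equivalently, one could quote Theorem~\ref{3.2} directly: choosing a process $\{\Xbs_y\}_{y\geq 0}$ with independent stationary increments and $\Xbs_1$ i.d.\ to $\Xbs$, the variable $\Xbs_n$ has the law of a sum of $n$ i.i.d.\ increments each i.d.\ to $\Xbs$, so $E[\Xbs_n^{\ibs}]=E[(\Xbs_1+\cdots+\Xbs_n)^{\ibs}]$, while $E[\Xbs_n^{\ibs}]=\C_{\ibs,\smallX}(n)$ by that theorem.

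I do not expect a genuine obstacle; the only point needing a word of care is the case in which $\Xbs$ admits no convergent cgf/mgf. There the displayed equalities should be read in ${\mathbb C}[[\zbs]]$, with $K_{\Xbs}(\zbs)$ replaced by the formal series $f(\kappabs_{\mubs},\zbs)-1$ attached to the formal cumulants of (\ref{(mgfcum)}), exactly as in the remark following Lemma~\ref{3.3}; since each coefficient $\C_{\ibs,\smallX}(n)$ involves only finitely many ring operations, the coefficientwise comparison remains legitimate and the statement holds verbatim, with $E[(\Xbs_1+\cdots+\Xbs_n)^{\ibs}]$ understood as the corresponding formal moment.
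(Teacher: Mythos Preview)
Your proposal is correct and matches the paper's intent: Corollary~\ref{3.5} is stated without proof, as an immediate consequence of Theorem~\ref{3.2} (equivalently of Lemma~\ref{3.3} with $y=n$), which is precisely the argument you give. Your care about the formal case is appropriate and consistent with the remark following Lemma~\ref{3.3}.
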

\begin{example}[Merton's Jump Diffusion Model] {\rm Let us consider the multivariate version of Merton's model \cite{Merton}, that is
$\Xbs_t = \mbs t + \BBs_t + \sum_{j = 1}^{N_t} \Ybs_j$ with $\mbs \in \Real^d, \BBs_t$ a multivariate Brownian motion with covariance rate
$\Sigma t, N_t$ a Poisson process with intensity rate $\lambda$ and $\{\Ybs_j\}$ a sequence of i.i.d. random vectors with multivariate
Gaussian distribution $\Ybs \sim N(\tilde{\mbs}, \tilde{\Sigma}).$ By using the L\'evy-Khintchine formula \cite{Sato}, the mgf of $\Xbs_t$ is
$\exp \{ t [K_{\Xbs_1}(\zbs)]\}$ with
\begin{equation}
K_{\Xbs_1}(\zbs) = \langle \mbs, \zbs \rangle + \frac{1}{2} \langle \zbs, \zbs \Sigma t \rangle + \lambda \int_{\Real^d - \{0\}} \left( e^{\langle \xbs, \zbs \rangle} - 1 \right) \nu({\rm d} \xbs),
\label{(cum1)}
\end{equation}
with $\nu$ the multivariate (L\'evy) measure $N(\tilde{\mbs}, \tilde{\Sigma}).$ By considering the Taylor expansion of the integrand function in (\ref{(cum1)}) we have
\begin{equation}
K_{\Xbs_1}(\zbs) = \langle \mbs + \tilde{\mbs}, \zbs \rangle + \frac{1}{2} \langle \zbs, \zbs ( \Sigma t + \tilde{\Sigma} ) \rangle + \lambda \sum_{\ibs \geq 3 } \frac{\zbs^{\ibs}}{\ibs!} E \left(\Ybs^{\ibs} \right).
\label{(cum2)}
\end{equation}
From Theorem \ref{3.2}, moments of $\En[\Xbs^{\ibs}_t]=\C_{\ibs, \smallX}(t) $ are cumulant polynomials with $\{c_{\ibs}\}$ coefficients of the formal power series (\ref{(cum2)}).}
\end{example}
More in general, depending on what is plugged in the indeterminate $y,$ cumulant polynomial sequences give also cumulants. So they play a double role as the following theorem proves for random sums.

\begin{theorem} \label{cor1bis} If $\Sbs_N$ is a random sum and $c_{\ibs}=c_{\ibs}(\Sbs_N)$ is its $\ibs$-th cumulant, then
$$\En[\C_{\ibs,\smallX}(\kappa_N)] = c_{\ibs}(\Sbs_N) \quad \hbox{and} \quad E[\C_{\ibs,\smallX}(N)] = E[\Sbs^{\ibs}_N].$$
\end{theorem}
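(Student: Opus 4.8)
The plan is to prove both identities by composing generating functions and then matching coefficients, exactly as in the proofs of Lemmas \ref{multcomp} and \ref{3.3}. Write $K_{\smallX}(\zbs)$ for the cgf of the summand vector $\Xbs$ and $K_{\ms N}(z)$ for the cgf of the index $N$ (if these do not exist, replace them throughout by the formal series $f(\kappabs_{\ms \smallX},\zbs)-1$ and $f(\kappa_{\ms N},z)-1$, invoking the remark after Lemma \ref{3.3}; the argument is purely formal). By Lemma \ref{3.3} the bivariate generating function of the cumulant polynomials is $\sum_{\ibs\geq 0}\C_{\ibs,\smallX}(y)\,\zbs^{\ibs}/\ibs! = \exp\{y K_{\smallX}(\zbs)\}$. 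The first step is therefore to substitute the polynomial umbra $\kappa_{\ms N}$ for the indeterminate $y$ and apply the evaluation $\En$. Since $\C_{\ibs,\smallX}(y)$ is a polynomial in $y$ of degree $\lf(\lambdabs)\le |\ibs|$ with coefficients that are constants (built from the $c_{\lambdabs_j}$), linearity of $\En$ gives $\En[\C_{\ibs,\smallX}(\kappa_{\ms N})] = \ibs!\sum_{\lambdabs\vdash\ibs}\frac{\En[\kappa_{\ms N}^{\lf(\lambdabs)}]}{\mf(\lambdabs)!\,\lambdabs!}\prod_j c_{\lambdabs_j}^{r_j} = \ibs!\sum_{\lambdabs\vdash\ibs}\frac{g_{\lf(\lambdabs)}}{\mf(\lambdabs)!\,\lambdabs!}\prod_j c_{\lambdabs_j}^{r_j}$, where $g_k=\En[\kappa_{\ms N}^k]$ is the $k$-th cumulant of $N$. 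This is precisely the right-hand side of \eqref{(detsum1)} in Lemma \ref{multcomp}, which identifies it with $h_{\ms\ibs}$, the $\ibs$-th cumulant of the random sum $\Sbs_N$. Hence $\En[\C_{\ibs,\smallX}(\kappa_N)] = c_{\ibs}(\Sbs_N)$.

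For the second identity I would argue at the level of generating functions and then condition on $N$. Replacing $y$ by the integer-valued r.v.\ $N$ in Lemma \ref{3.3} and taking expectations, $\sum_{\ibs\geq 0}E[\C_{\ibs,\smallX}(N)]\frac{\zbs^{\ibs}}{\ibs!} = E\big[\exp\{N K_{\smallX}(\zbs)\}\big] = E\big[(E\,e^{\langle\zbs,\Xbs\rangle})^{N}\big]$, because $\exp\{K_{\smallX}(\zbs)\}$ is the mgf $\phi_{\smallX}(\zbs)=E\,e^{\langle\zbs,\Xbs\rangle}$. By the standard conditioning identity for random sums, $E\big[e^{\langle\zbs,\Sbs_N\rangle}\big] = E\big[\phi_{\smallX}(\zbs)^N\big]$, which is exactly the mgf of $\Sbs_N$; equating the coefficient of $\zbs^{\ibs}/\ibs!$ on both sides yields $E[\C_{\ibs,\smallX}(N)] = E[\Sbs_N^{\ibs}]$. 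Alternatively — and perhaps more cleanly for the formal setting — one uses Corollary \ref{3.5}: conditionally on $\{N=n\}$, $\C_{\ibs,\smallX}(n) = E[(\Xbs_1+\cdots+\Xbs_n)^{\ibs}\mid N=n] = E[\Sbs_N^{\ibs}\mid N=n]$, and then averaging over the distribution of $N$ by the tower property gives the claim directly.

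The only genuinely delicate point is bookkeeping, not substance. One must be careful that in $\En[\C_{\ibs,\smallX}(\kappa_N)]$ the umbra $\kappa_N$ is treated as a scalar placeholder: the coefficients of the polynomial $\C_{\ibs,\smallX}(\cdot)$ are numbers (powers and products of the $c_{\lambdabs_j}$, or of the umbral monomials representing them), so no interaction between $\kappa_N$ and $\kappabs_{\smallX}$ occurs, and $\En$ acts on the resulting polynomial in $\kappa_N$ simply by sending $\kappa_N^k\mapsto g_k$. This is the content of the construction of $\kappa_{\ms y}\punt\kappabs_{\smallX}$ in Corollary \ref{2} together with Lemma \ref{7}: the generalized umbral sum $\kappa_N\punt\kappabs_{\smallX}$ has $\En\big(\exp\{\kappa_N[f(\kappabs_{\smallX},\zbs)-1]\}\big)$ as generating function, and the coefficient extraction there coincides term-by-term with the substitution $y\mapsto\kappa_N$ in Lemma \ref{3.3}. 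Once this identification is made explicit, both equalities drop out; I would present the first via Lemmas \ref{multcomp} and \ref{3.3}, and the second via Corollary \ref{3.5} and conditioning, since together they make transparent the advertised ``double role'' of the cumulant polynomials.
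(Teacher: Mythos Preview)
Your proof is correct. For the first identity you proceed exactly as the paper does, expanding $\C_{\ibs,\smallX}(\kappa_N)$ term by term and matching against the right-hand side of Lemma \ref{multcomp}.

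For the second identity your route differs from the paper's. You argue by conditioning: either replace $y$ by $N$ in Lemma \ref{3.3} and recognize $E[\phi_{\smallX}(\zbs)^N]$ as the mgf of $\Sbs_N$, or invoke Corollary \ref{3.5} pointwise on $\{N=n\}$ and average via the tower property. The paper instead stays inside the umbral framework: it introduces an auxiliary index $\tilde{N}$ (a Poisson($1$)-compound of $N$) whose \emph{cumulants} are the \emph{moments} of $N$, so that $\En[\kappa_{\tilde{N}}^i]=E[N^i]$; the first identity then applies with $\tilde{N}$ in place of $N$, reducing the claim to $c_{\ibs}(\Sbs_{\tilde{N}})=E[\Sbs_N^{\ibs}]$, which is checked by composing generating functions. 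Your argument is more elementary and transparent, and it shows directly why the second equality is a conditional-expectation statement rather than an umbral one. The paper's detour through $\tilde{N}$ has the advantage of anticipating the Bell-umbra construction $\kappa_\beta\punt\kappa_\alpha$ used immediately afterward in Theorem \ref{mom1} and Corollary \ref{3.6}: it is the same ``cumulants-equal-moments'' trick, so the proof there doubles as a warm-up for the general umbral version.
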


\begin{proof}
The first result follows from Lemma \ref{multcomp}. The second result follows by observing that $f(N,z)-1$ is the cgf of a
random sum, say $\tilde{N}_{\tiny \hbox{Po}(1)},$ indexed by a Poisson r.v. of parameter $1$ and involving
independent r.v.'s identically distributed to $N.$ So $\En[\kappa_{\tilde{N}}^i] = E[N^i]$ for all non-negative integer
$i.$ Let us consider the random sum $\Sbs_{\tilde{N}_{\tiny \hbox{Po}(1)}} = \Xbs_1 + \cdots + \Xbs_{\tilde{N}_{\tiny
\hbox{Po}(1)}},$ shortly denoted  by $\Sbs_{\tilde{N}}.$ Then we have to prove that $c_{\ibs}(\Sbs_{\tilde{N}}) = E[\Sbs^{\ibs}_N].$ The result is true since $K_{\Sbs_{\tilde{N}}}(z)= K_{\tilde{N}}(K_{\Xbs}(z))=f(N,K_{\Xbs}(z))-1=f(\Sbs_N,z)-1,$ where the last equality follows by observing that $M_{\Sbs_N}(\zbs)=M_N(K_{\Xbs}(\zbs))$ if $M(\cdot)$ denotes the mgf of $\Sbs_N$ and $N$ respectively.
\end{proof}

In order to generalize Theorem \ref{cor1bis} when the r.v. $N$ is replaced by an umbra, we need to introduce a special umbra,
called the {\it Bell} umbra denoted by the symbol $\kappa_{\beta}.$ This umbra has gf $f(\kappa_{\ms \beta},z)=e^z$ and
represents the sequence $\{1,1,\ldots\}.$ Therefore the Bell umbra is the umbral counterpart of a Poisson r.v. $\hbox{Po}(1)$ of
parameter $1$ with mgf $e^{e^z-1},$ whose Taylor expansion has coefficients equal to the Bell numbers \cite{Dinardoeurop}.

\begin{theorem} \label{mom1}
Cumulants of the generalized umbral sum are $\En[\C_{\ibs,\smallX}(\kappa_{\alpha})] = \En[( \kappa_{\alpha} \punt 
\kappabs_{\smallX})^{\ibs}]$ for all ${\ibs} \in {\mathbb N}_{\ms 0}^d,$ and represent the cumulant sequence of $\{\En[\C_{\ibs,\smallX}(\kappa_{\beta} \punt \kappa_{\alpha})]\}$ according to (\ref{(mgfcum)}).
\end{theorem}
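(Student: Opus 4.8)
The plan is to translate the two asserted identities into statements about generating functions and then read them off from Lemma \ref{7} and Lemma \ref{3.3}, together with the definition of the Bell umbra. The first identity, $\En[\C_{\ibs,\smallX}(\kappa_{\alpha})] = \En[(\kappa_{\alpha} \punt \kappabs_{\smallX})^{\ibs}]$, should follow directly by applying $\En$ to Definition \ref{BCP} with $y$ replaced by $\kappa_{\alpha}$: expanding $\C_{\ibs,\smallX}(\kappa_{\alpha})$ as a polynomial in $\kappa_{\alpha}$ and using $\En[\kappa_{\alpha}^{\lf(\lambdabs)}] = g_{\lf(\lambdabs)}$ gives exactly the right-hand side of (\ref{(detsum1)}), which by Lemma \ref{multcomp} (read with the index $N$ replaced by the umbra $\alpha$, i.e. $g_j = \En[\kappa_{\alpha}^j]$ as in the proof of Lemma \ref{7}) equals $\En[(\kappa_{\alpha} \punt \kappabs_{\smallX})^{\ibs}]$. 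Equivalently, at the level of gf's, Lemma \ref{3.3} gives $\sum_{\ibs} \C_{\ibs,\smallX}(y)\zbs^{\ibs}/\ibs! = \exp\{y K_{\Xbs}(\zbs)\}$; substituting $y = \kappa_{\alpha}$ and taking the (conditional, treating $\kappa_{\alpha}$ as a constant) evaluation $\En$ yields $\En(\exp\{\kappa_{\alpha}[f(\kappabs_{\smallX},\zbs)-1]\})$, which is precisely $f(\kappa_{\alpha}\punt\kappabs_{\smallX},\zbs)$ by Lemma \ref{7}. I would present this gf computation as the main line, since it makes the bookkeeping transparent.

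For the second assertion I would argue that the sequence $\{\En[\C_{\ibs,\smallX}(\kappa_{\alpha})]\}_{\ibs}$, call its members $h_{\ibs}$, is the \emph{cumulant} sequence of $\{\En[\C_{\ibs,\smallX}(\kappa_{\beta}\punt\kappa_{\alpha})]\}_{\ibs}$ in the sense of (\ref{(mgfcum)}). By the first part (applied once with $\alpha$ and once with the umbra $\kappa_{\beta}\punt\kappa_{\alpha}$ in the role of $\alpha$), this amounts to showing
\begin{equation}
\sum_{\ibs\geq 0}\En[(\,(\kappa_{\beta}\punt\kappa_{\alpha})\punt\kappabs_{\smallX})^{\ibs}]\frac{\zbs^{\ibs}}{\ibs!} \;=\; \exp\!\left(\sum_{\ibs>0}\En[(\kappa_{\alpha}\punt\kappabs_{\smallX})^{\ibs}]\frac{\zbs^{\ibs}}{\ibs!}\right).
\end{equation}
The left side, by Lemma \ref{7} applied twice (associativity of the dot-product on gf's under composition), equals $f[\kappa_{\beta}, f(\kappa_{\alpha}\punt\kappabs_{\smallX},\zbs)-1] = \exp\{f(\kappa_{\alpha}\punt\kappabs_{\smallX},\zbs)-1\}$, using that the Bell umbra $\kappa_{\beta}$ has gf $e^z$. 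Since $f(\kappa_{\alpha}\punt\kappabs_{\smallX},\zbs)-1 = \sum_{\ibs>0}\En[(\kappa_{\alpha}\punt\kappabs_{\smallX})^{\ibs}]\zbs^{\ibs}/\ibs!$ (its constant term is $1$, by the remark after Definition \ref{(cgfdef)}), this is exactly the right-hand side above. Hence the coefficients $\En[\C_{\ibs,\smallX}(\kappa_{\alpha})]$ are the formal cumulants of $\En[\C_{\ibs,\smallX}(\kappa_{\beta}\punt\kappa_{\alpha})]$.

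The step I expect to need the most care is the legitimacy of substituting an umbra for the indeterminate $y$ in the gf identity of Lemma \ref{3.3} and commuting $\En$ with the (infinite) power-series composition — i.e. making precise the "conditional evaluation treating $\kappa_{\alpha}$ as a constant" device invoked after Lemma \ref{7}. This is safe because each coefficient of $\zbs^{\ibs}$ on both sides is a polynomial in $\kappa_{\alpha}$ of bounded degree (the partitions $\lambdabs\vdash\ibs$ have $\lf(\lambdabs)\le|\ibs|$), so only finitely many umbral operations are involved per coefficient and $\En$ acts coefficientwise; I would state this once and then use it freely. The associativity $f((\kappa_{\beta}\punt\kappa_{\alpha})\punt\kappabs_{\smallX},\zbs) = f[\kappa_{\beta},f(\kappa_{\alpha}\punt\kappabs_{\smallX},\zbs)-1]$ likewise reduces to the elementary fact that composition of delta series is associative, which is already implicit in (\ref{(faamult)}) and Lemma \ref{7}.
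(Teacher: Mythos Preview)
Your proposal is correct and follows essentially the same approach as the paper. For the first identity the paper does precisely your coefficient-level argument (apply $\En$ to Definition \ref{BCP} with $y=\kappa_{\alpha}$ and compare with (\ref{(detsum1)}) via Lemma \ref{7}); for the second it uses exactly your gf composition argument, observing that $f(\kappa_{\beta}\punt\kappa_{\alpha},z)=\exp\{f(\kappa_{\alpha},z)-1\}$ and that the gf of $\{\C_{\ibs,\smallX}(\kappa_{\beta}\punt\kappa_{\alpha})\}$ is the composition of this with $f(\kappabs_{\smallX},\zbs)$, which is your displayed identity once you unfold the associativity.
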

\begin{proof}
The first result follows from Proposition \ref{7}, by comparing $h_{\ibs}$ in (\ref{(detsum1)}) with
$$
\En \left[\C_{\ibs, \smallX}(\kappa_{\alpha}) \right] = \ibs! \, \sum_{\lambdabs \vdash \ibs} \frac{\En[\kappa_{\alpha}^{\ms \lf(\lambdabs)}]} {\mf(\lambdabs)! \lambdabs!}
 \prod_{j} c_{\lambdabs_j}^{r_j}.
$$
For the second equality, according to Proposition \ref{7}, $f(\kappa_{\beta} \punt \kappa_{\alpha},z)=\exp \left( f(\kappa_{\alpha},z) - 1 \right)$ and the gf of
$\{\C_{\ibs,\Xbs}(\kappa_{\beta} \punt \kappa_{\alpha})\}$ is the composition of $\exp \left( f(\kappa_{\alpha},z) - 1 \right)$ and $f(\kappabs_{\smallX},\zbs).$
\end{proof}
\begin{corollary} \label{3.6}
$\En[\C_{\ibs,\smallX}(\kappa_{u})] = c_{\ibs}(\Xbs)$ and $\En[\C_{\ibs,\smallX}(\kappa_{\beta})]  = E[\Xbs^{\ibs}].$
\end{corollary}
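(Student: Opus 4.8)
The plan is to obtain both identities as the two simplest specializations of Theorem \ref{mom1}, taking for $\kappa_{\alpha}$ first the left-hand unity $\kappa_{u}$ and then the Bell umbra $\kappa_{\beta}$. Theorem \ref{mom1} gives $\En[\C_{\ibs,\smallX}(\kappa_{\alpha})] = \En[(\kappa_{\alpha} \punt \kappabs_{\smallX})^{\ibs}]$ for every umbra $\kappa_{\alpha}$, so it is enough, in each case, to identify the gf of the generalized umbral sum via Lemma \ref{7} and read off the coefficient of $\zbs^{\ibs}/\ibs!$.

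For the first identity I would use that $\kappa_{u}$ represents $\{1,1,0,0,\ldots\}$, so $f(\kappa_{u},z)=1+z$ and hence, by Lemma \ref{7}, $f(\kappa_{u}\punt\kappabs_{\smallX},\zbs)=1+[f(\kappabs_{\smallX},\zbs)-1]=f(\kappabs_{\smallX},\zbs)$; that is $\kappa_{u}\punt\kappabs_{\smallX}\equiv\kappabs_{\smallX}$, which was already noted just after the definition of the umbral sum. Comparing coefficients, $\En[(\kappa_{u}\punt\kappabs_{\smallX})^{\ibs}]=\En[\kappabs_{\smallX}^{\ibs}]=c_{\ibs}(\Xbs)$, so $\En[\C_{\ibs,\smallX}(\kappa_{u})]=c_{\ibs}(\Xbs)$. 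Alternatively one can argue straight from (\ref{(detsum)}): since $\En[\kappa_{u}^{\lf(\lambdabs)}]$ vanishes unless $\lf(\lambdabs)=1$, and the only partition of $\ibs$ with a single column is $\lambdabs=(\ibs)$ itself, for which $\mf(\lambdabs)!\,\lambdabs!=\ibs!$ and $\prod_{j}c_{\lambdabs_j}^{r_j}=c_{\ibs}$, the sum collapses to $c_{\ibs}$.

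For the second identity the Bell umbra has $f(\kappa_{\beta},z)=e^{z}$, so Lemma \ref{7} yields $f(\kappa_{\beta}\punt\kappabs_{\smallX},\zbs)=\exp\{f(\kappabs_{\smallX},\zbs)-1\}$. Now $f(\kappabs_{\smallX},\zbs)-1=\sum_{\ibs>0}c_{\ibs}\zbs^{\ibs}/\ibs!$ because $\En[\kappabs_{\smallX}^{\zerobs}]=1$, and by the defining relation (\ref{(mgfcum)}) between a cumulant sequence and its moment sequence, $\exp\{f(\kappabs_{\smallX},\zbs)-1\}=\sum_{\ibs\geq0}E[\Xbs^{\ibs}]\,\zbs^{\ibs}/\ibs!$. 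Reading off coefficients gives $\En[(\kappa_{\beta}\punt\kappabs_{\smallX})^{\ibs}]=E[\Xbs^{\ibs}]$, hence $\En[\C_{\ibs,\smallX}(\kappa_{\beta})]=E[\Xbs^{\ibs}]$. A slick variant: apply $\En$ to the identity of Lemma \ref{3.3}, $\sum_{\ibs\geq0}\C_{\ibs,\smallX}(y)\zbs^{\ibs}/\ibs!=\exp\{yK_{\Xbs}(\zbs)\}$, after replacing $y$ by $\kappa_{\beta}$; since $\En[\exp\{\kappa_{\beta}w\}]=e^{w}$ one gets $\sum_{\ibs\geq0}\En[\C_{\ibs,\smallX}(\kappa_{\beta})]\zbs^{\ibs}/\ibs!=e^{K_{\Xbs}(\zbs)}=M_{\Xbs}(\zbs)$, and the analogous computation with $\En[\exp\{\kappa_{u}w\}]=1+w$ recovers the first identity.

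There is essentially no obstacle here; the only point that needs care is the bookkeeping that turns the compositional identity for the gf of a generalized umbral sum into a statement about $\En$ applied to its powers. One must keep straight that the gf of an umbra has constant term $1$ (unlike a cgf), so that $f(\kappabs_{\smallX},\zbs)-1$ is the delta series fed into the composition, and that the ``moment'' side of (\ref{(mgfcum)}) is precisely $E[\Xbs^{\ibs}]$ when $\{c_{\ibs}\}$ is the cumulant sequence of $\Xbs$. With those conventions fixed, the corollary is a direct substitution into Theorem \ref{mom1}.
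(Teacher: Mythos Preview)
Your proposal is correct and is precisely the argument the paper intends: Corollary~\ref{3.6} is stated without proof as an immediate specialization of Theorem~\ref{mom1}, and you carry out exactly that specialization, using $\kappa_{u}\punt\kappabs_{\smallX}\equiv\kappabs_{\smallX}$ for the first identity and Lemma~\ref{7} together with (\ref{(mgfcum)}) for the second. Your alternative routes (collapsing (\ref{(detsum)}) directly, or applying $\En$ to Lemma~\ref{3.3}) are also valid and add nothing beyond what the paper's framework already provides.
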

\begin{example}[Common Clock Variance Gamma]
{\rm A common clock variance gamma model \cite{Deelstra} is a multivariate L\'evy stochastic process subordinated\footnote[5]{A subordinated stochastic process $\{Y_t\}$ is the stochastic time of a different stochastic process $\{X_T\},$ that is $T = Y_t$ a.s..} to a multivariate Brownian motion $\BBs_t$ with covariance rate $\Sigma t.$ Two sources of dependence are superimposed by using a univariate Gamma process
$G_t$ weighted by a $d$-dimensional drift $\thetabs.$ The resulting multivariate L\'evy process is
$\Xbs_t=\thetabs G_t + \BBs_{G_t}.$ Its mgf is
\begin{equation}
\phi_t(\zbs)=\left( \frac{1}{1- \nu \langle \thetabs, \zbs \rangle  - \frac{1}{2} \nu \langle \zbs, \zbs \Sigma \rangle } \right)^{t/\nu}
= \exp \left( - \frac{t}{\nu} \log \left[ 1 - \left( \nu \langle \thetabs, \zbs \rangle + \frac{1}{2} \nu \langle \zbs, \zbs \Sigma \rangle \right)\right] \right).
\label{(CCVG)}
\end{equation}
Consider the family $\{\kappa_{\alpha_t}\}$ with gf
$$f(\kappa_{\alpha_t},z) = 1 - \frac{t}{\nu} \log ( 1 -z)$$
whose coefficients are $\{\frac{t}{\nu} (i-1)!\}.$ Then the gf in (\ref{(CCVG)}) is of type (\ref{(mgfcum)}) with 
$\kappabs_{\tilde{\smallX}}$ given in Example \ref{Gauss} and $\tilde{\Xbs} \sim N(\thetabs \nu, \Sigma \nu).$
From Theorem \ref{mom1} we have $E[\Xbs_t^{\ibs}] = \En[\C_{\ibs,\tilde{\smallX}}(\kappa_{\beta} \punt \kappa_{\alpha_t})]$ with
$$f(\kappa_{\beta} \punt \kappa_{\alpha_t}, z) = \left(\frac{1}{1-z}\right)^{t/\nu} = \sum_{i \geq 0} p_i(t,\nu) \frac{z^i}{i!} \quad \hbox{and} \quad p_i(t,\nu) = \sum_{\lambda \vdash i} \binom{i}{r_1, r_2, \ldots} \left( \frac{t}{\nu} \right)_{\lf(\lambda)}$$
and $( \cdot )_{\lf(\lambda)}$ denotes the lower factorial.}
\end{example}
The homogeneity property of cumulants holds for cumulant polynomial sequences.
\begin{lemma}[Homogeneity] \label{homogeneity}
If $a \in {\mathbb C},$ then $\C_{\ibs,a \smallX}(y)=a^{|\ibs|} \C_{\ibs,\smallX}(y).$
\end{lemma}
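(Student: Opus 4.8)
The plan is to exploit the generating-function characterization of $\C_{\ibs,\smallX}(y)$ from Lemma~\ref{3.3} rather than manipulating the explicit sum~(\ref{(detsum)}) directly, since the homogeneity property of cumulants is most transparent at the level of cumulant generating functions. First I would recall that if $K_{\Xbs}(\zbs)$ is the cgf of $\Xbs$, then the cgf of $a\Xbs$ satisfies $K_{a\Xbs}(\zbs) = K_{\Xbs}(a\zbs)$; equivalently, in terms of the sequence of cumulants, $c_{\ibs}(a\Xbs) = a^{|\ibs|} c_{\ibs}(\Xbs)$, which is precisely the multivariate homogeneity property stated in the Introduction. (If $\Xbs$ has no cgf, one works instead with the formal power series $f(\kappabs_{\mubs},\zbs)-1$ as in the Remark following Lemma~\ref{3.3}, and the same rescaling $\zbs \mapsto a\zbs$ of the formal indeterminate applies.)

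Next I would apply Lemma~\ref{3.3} to both $a\Xbs$ and $\Xbs$. On one hand,
\begin{equation}
\sum_{\ibs \geq 0} \C_{\ibs,a\smallX}(y) \frac{\zbs^{\ibs}}{\ibs!} = \exp\left\{ y K_{a\Xbs}(\zbs) \right\} = \exp\left\{ y K_{\Xbs}(a\zbs) \right\}.
\label{(homog1)}
\end{equation}
On the other hand, replacing $\zbs$ by $a\zbs$ in the generating function of $\C_{\ibs,\smallX}(y)$ gives
\begin{equation}
\exp\left\{ y K_{\Xbs}(a\zbs) \right\} = \sum_{\ibs \geq 0} \C_{\ibs,\smallX}(y) \frac{(a\zbs)^{\ibs}}{\ibs!} = \sum_{\ibs \geq 0} a^{|\ibs|}\, \C_{\ibs,\smallX}(y) \frac{\zbs^{\ibs}}{\ibs!},
\label{(homog2)}
\end{equation}
where I use that $(a\zbs)^{\ibs} = a^{i_1}\cdots a^{i_d}\, \zbs^{\ibs} = a^{|\ibs|}\zbs^{\ibs}$ with $|\ibs| = i_1 + \cdots + i_d$. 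Comparing the coefficient of $\zbs^{\ibs}/\ibs!$ in~(\ref{(homog1)}) and~(\ref{(homog2)}), which is legitimate since these are identities in ${\mathbb C}[y][[\zbs]]$, yields $\C_{\ibs,a\smallX}(y) = a^{|\ibs|}\,\C_{\ibs,\smallX}(y)$.

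An entirely equivalent route, which I would mention as an alternative, is to argue directly from~(\ref{(detsum)}): each monomial $\prod_j c_{\lambdabs_j}^{r_j}$ in $\C_{\ibs,a\smallX}(y)$ picks up a factor $\prod_j (a^{|\lambdabs_j|})^{r_j} = a^{\sum_j r_j |\lambdabs_j|}$, and since $\lambdabs \vdash \ibs$ the column sums reconstruct $\ibs$, so $\sum_j r_j |\lambdabs_j| = |\ibs|$ for every partition $\lambdabs$ of $\ibs$; factoring out the common $a^{|\ibs|}$ gives the claim. There is no real obstacle here: the only point requiring a moment's care is the bookkeeping identity $\sum_j r_j|\lambdabs_j| = |\ibs|$, i.e.\ that the total weight of a multi-index partition equals $|\ibs|$ independent of which partition is chosen, which is immediate from the definition of $\lambdabs \vdash \ibs$. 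I would present the generating-function proof as the main argument for its brevity and then note the combinatorial verification as a sanity check.
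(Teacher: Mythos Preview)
Your argument is correct. The paper states this lemma without proof, so there is nothing to compare against; your generating-function route via Lemma~\ref{3.3} and the substitution $\zbs\mapsto a\zbs$ is a clean and complete justification, and the alternative combinatorial verification from~(\ref{(detsum)}) using $\sum_j r_j|\lambdabs_j|=|\ibs|$ is equally valid.
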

\begin{lemma}[Convolution] \label{additivity}
If $\{ \Xbs_1, \ldots, \Xbs_n\}$ is a set of independent random vectors then
$$\C_{\ibs,\Xbs_1 + \cdots + \Xbs_n}(y) = \sum_{\mycom{(\ibs_1, \ldots,\ibs_n) \in {\mathbb N}_{\ms 0}^d}{\ibs_1+ \ldots+\ibs_n = \ibs}} \binom{\ibs}{\ibs_1, \ldots, \ibs_n} \C_{\ibs_1,\Xbs_1}(y) \cdots \C_{\ibs_n,\Xbs_n}(y).$$
\end{lemma}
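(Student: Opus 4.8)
The plan is to prove the convolution formula for cumulant polynomials by passing to generating functions, where the statement becomes the familiar fact that the exponential generating function turns a product of independent pieces into a product of series. By Lemma \ref{3.3}, the generating function of $\{\C_{\ibs,\smallX}(y)\}$ is $\exp\{y K_{\Xbs}(\zbs)\}$, so the entire content of the lemma is encoded in the additivity of cumulant generating functions under independence.

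First I would recall that if $\Xbs_1,\ldots,\Xbs_n$ are independent, then $K_{\Xbs_1+\cdots+\Xbs_n}(\zbs) = K_{\Xbs_1}(\zbs) + \cdots + K_{\Xbs_n}(\zbs)$; this is the multivariate additivity property of cumulants quoted in the introduction. Exponentiating and using Lemma \ref{3.3} on each factor,
\begin{equation*}
\sum_{\ibs \geq 0} \C_{\ibs,\Xbs_1+\cdots+\Xbs_n}(y)\frac{\zbs^{\ibs}}{\ibs!} = \exp\{y K_{\Xbs_1+\cdots+\Xbs_n}(\zbs)\} = \prod_{m=1}^{n} \exp\{y K_{\Xbs_m}(\zbs)\} = \prod_{m=1}^{n}\left(\sum_{\ibs_m \geq 0}\C_{\ibs_m,\Xbs_m}(y)\frac{\zbs^{\ibs_m}}{\ibs_m!}\right).
\end{equation*}
Then I would extract the coefficient of $\zbs^{\ibs}/\ibs!$ on both sides: the product of exponential generating functions has, as its $\ibs$-th coefficient, exactly $\sum \binom{\ibs}{\ibs_1,\ldots,\ibs_n}\C_{\ibs_1,\Xbs_1}(y)\cdots\C_{\ibs_n,\Xbs_n}(y)$, the sum running over $(\ibs_1,\ldots,\ibs_n)\in{\mathbb N}_{\ms 0}^d$ with $\ibs_1+\cdots+\ibs_n=\ibs$ (this is the multivariate Cauchy product, i.e. the Vandermonde/multinomial convolution for exponential generating functions). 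Equating coefficients yields the claim.

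The only genuinely delicate point is the bookkeeping with the multi-index multinomial coefficient $\binom{\ibs}{\ibs_1,\ldots,\ibs_n} = \ibs!/(\ibs_1!\cdots\ibs_n!)$ and verifying that the coefficient-extraction in the ring ${\mathbb C}[[\zbs]]$ of multivariate formal power series is legitimate — but this is exactly the "finitely many operations per coefficient" caveat already noted for ${\mathbb C}[[\zbs]]$, and poses no real obstacle since for fixed $\ibs$ only finitely many tuples $(\ibs_1,\ldots,\ibs_n)$ contribute. If $\Xbs_m$ fails to admit a cgf, one replaces $K_{\Xbs_m}(\zbs)$ throughout by the formal series $f(\kappabs_{\mubs_m},\zbs)-1$, exactly as in the Remark following Lemma \ref{3.3}, and the argument is unchanged. (Homogeneity, Lemma \ref{homogeneity}, follows the same way from $K_{a\Xbs}(\zbs)=K_{\Xbs}(a\zbs)$ together with $\C_{\ibs,\smallX}$ being weighted-homogeneous of degree $|\ibs|$ in the $c_{\lambdabs_j}$, but that is a separate statement.)
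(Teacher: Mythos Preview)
Your proof is correct and follows essentially the same route as the paper: the paper also invokes additivity of cumulants to write $\exp[y K_{\Xbs_1+\cdots+\Xbs_n}(\zbs)] = \prod_i \exp[y K_{\Xbs_i}(\zbs)]$ and then says ``the result follows by using the multinomial expansion,'' which is exactly your coefficient-extraction step. Your additional remarks on the formal power series bookkeeping and the $f(\kappabs_{\mubs_m},\zbs)-1$ replacement are correct elaborations but not a different argument.
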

\begin{proof}
Due to the additivity property of multivariate cumulants
\begin{equation}
\exp \left[ y K_{\smallX_1 + \cdots + \smallX_n} (\zbs) \right] = \exp \left[ y \sum_{i=1}^n K_{\smallX_i}(\zbs) \right] = \prod_{i=1}^n
\exp \left[ y K_{\smallX_i}(\zbs) \right].
\label{add1}
\end{equation}
The result follows by using the multinomial expansion.
\end{proof}
\begin{example}[Multivariate natural exponential models] \label{Example3.13} {\rm The symbolic method allows us to represent not only formal power series related to mgf's or cgf's  but also families of distributions functions, as for example natural exponential families (NEF). Indeed, let $\Theta$ be the largest open set in $\Real^d$ for which $E[e^{\langle \thetabs, \Xbs \rangle}] < \infty.$ The NEF generated by $\Xbs$ is
$$
F_{\smallX}(\thetabs) = \exp \left\{\langle \thetabs, \xbs \rangle - K_{\smallX}(\thetabs) \right\}, \qquad \thetabs \in \Theta,
$$
with $K_{\smallX}(\thetabs)$ the cgf of $\Xbs.$ Then $F_{\smallX}(\thetabs)$ admits an expansion in formal power series such that
\begin{equation}
F_{\smallX}(\thetabs) = \sum_{\ibs \geq 0} \C_{\ibs,\footnotesize{\xbs}-\smallX}(1) \frac{\thetabs^{\ibs}}{\ibs!} \in {\mathbb C}[[\thetabs]].
\label{(expmod)}
\end{equation}
Indeed due to the property of semi-invariance for translation we have $K_{\footnotesize{\xbs}-\smallX}(\thetabs) =
\langle \xbs, \thetabs \rangle - K_{\smallX}(\thetabs)$ and equation (\ref{(expmod)}) follows from Lemma \ref{3.3}.
Since
$$M_{\smallX}(\zbs) = E[e^{\langle \xbs, \zbs \rangle}] = \exp\left\{ K_{\smallX}(\zbs + \thetabs) - K_{\smallX}(\thetabs)\} \right\}$$
moments of NEF can be recovered from cumulant polynomial sequences as shown in the following. Extend coefficient-wise
the evaluation $\En$ to ${\mathbb C}[\A] [[\zbs]]$ in order to have
\begin{equation}
f(\kappabs, \zbs) = \En \left[ e^{\langle \kappabs, \zbs \rangle} \right] = \En \left[ e^{\langle \kappabs, (\zbs-\thetabs) \rangle}
e^{\langle \kappabs, \thetabs \rangle} \right] = f(\kappabs, \thetabs) + \sum_{i > 0} c_{\ibs,\thetabs} \frac{(\zbs - \thetabs)^{\ibs}}{\ibs!}
\label{NEF}
\end{equation}
where $c_{\ibs,\thetabs} = \En \left[\kappabs^{\ibs} e^{\langle \kappabs, \thetabs \rangle}\right].$ Equation (\ref{NEF}) is the Taylor expansion of $f(\kappabs, \zbs)$ with initial point $\thetabs$ so that we can define an umbra $\kappabs_{\thetabs}$ representing the sequence $\{c_{\ibs,\thetabs}\}$ such that $f(\kappabs_{\thetabs}, \zbs) = 1 + f(\kappabs, \zbs + \thetabs) - f(\kappabs, \thetabs).$ Choose $K(\zbs)$ as $f(\kappabs,\zbs)-1,$ then $\En \left[\C_{\ibs, \Xbs_{\thetabs}} (\kappa_{\beta}) \right] = E[\Xbs^{\ibs}].$ Some remarks on further developments of this approach
are added at the end of the paper.}
\end{example}
%
\section{Multivariable generalizations}
A first way to generalize cumulant polynomial sequences to the multivariate case is when a summation of indeterminates $y_1 + \cdots + y_n$ is plugged in $\C_{\ibs,\smallX}(y)$ and polynomials on the ring ${\mathbb C}[y_1, \ldots, y_n]$ are considered. A multinomial expansion of ${\mathcal C}_{\ibs,\smallX}(y_1 + \cdots + y_n)$ is given in the following theorem and the proof follows the same arguments given in Lemma \ref{additivity}.
\begin{theorem}[Multinomial property] \label{multinomial}
$$\C_{\ibs,\smallX}(y_1 + \cdots + y_n)=\sum_{\mycom{(\ibs_1, \ldots,\ibs_n) \in {\mathbb N}_{\ms 0}^d}{\ibs_1+ \ldots+\ibs_n = \ibs}} \binom{\ibs}{\ibs_1, \ldots, \ibs_n} \C_{\ibs_1,\smallX}(y_1) \cdots \C_{\ibs_n,\smallX}(y_n).$$
\end{theorem}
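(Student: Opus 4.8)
The plan is to mimic the generating-function argument used for the Convolution Lemma (Lemma~\ref{additivity}), replacing the additivity of cumulants by the trivial additivity of the scalar multipliers $y_1,\ldots,y_n$. First I would invoke Lemma~\ref{3.3}, which tells us that the exponential generating function of the sequence $\{\C_{\ibs,\smallX}(y)\}_{\ibs}$ is $\exp\{y K_{\Xbs}(\zbs)\}$ (with $K_{\Xbs}(\zbs)$ replaced by $f(\kappabs_{\mubs},\zbs)-1$ in the formal case). Applying this with $y$ replaced by $y_1+\cdots+y_n$ gives
\begin{equation}
\sum_{\ibs \geq 0} \C_{\ibs,\smallX}(y_1+\cdots+y_n) \frac{\zbs^{\ibs}}{\ibs!} = \exp\left\{(y_1+\cdots+y_n) K_{\Xbs}(\zbs)\right\} = \prod_{k=1}^{n} \exp\left\{y_k K_{\Xbs}(\zbs)\right\}.
\label{multinomialgf}
\end{equation}

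Next I would expand each factor on the right of \eqref{multinomialgf} again by Lemma~\ref{3.3}, so that $\exp\{y_k K_{\Xbs}(\zbs)\} = \sum_{\ibs_k \geq 0} \C_{\ibs_k,\smallX}(y_k)\,\zbs^{\ibs_k}/\ibs_k!$, and then collect the coefficient of $\zbs^{\ibs}/\ibs!$ in the product. The Cauchy product of $n$ exponential generating series introduces exactly the multinomial coefficient $\binom{\ibs}{\ibs_1,\ldots,\ibs_n}$ in front of the term indexed by an ordered decomposition $\ibs_1+\cdots+\ibs_n=\ibs$ with each $\ibs_k \in {\mathbb N}_{\ms 0}^d$; matching coefficients with the left-hand side of \eqref{multinomialgf} then yields the claimed identity. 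Since the identity is between polynomials in $y_1,\ldots,y_n$ with coefficients that are polynomials in the $c_{\ibs}$, and both sides are obtained as coefficients in a well-defined formal power series in ${\mathbb C}[y_1,\ldots,y_n][[\zbs]]$, the argument is valid even when $\Xbs$ admits no cgf, using formal cumulants as in the remark following Lemma~\ref{3.3}.

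The only mild subtlety—and really the only place any care is needed—is the bookkeeping of the multi-index multinomial coefficients when multiplying $n$ exponential-type generating functions in $d$ formal variables: one must check that the factorials $\ibs!$ in the denominators combine correctly to produce $\binom{\ibs}{\ibs_1,\ldots,\ibs_n} = \ibs!/(\ibs_1!\cdots\ibs_n!)$. This is the standard multi-index Cauchy product and is entirely routine; it is the same computation already invoked at the end of the proof of Lemma~\ref{additivity}, so in the write-up I would simply remark that the proof follows the same argument as Lemma~\ref{additivity}, with the additivity of cumulants replaced by the identity $(y_1+\cdots+y_n)K_{\Xbs}(\zbs) = \sum_k y_k K_{\Xbs}(\zbs)$. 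There is no genuine obstacle here; the statement is essentially a specialization of the convolution formula to $n$ independent copies of $\Xbs$ scaled by the scalars $y_k$, consistent with Corollary~\ref{3.5}.
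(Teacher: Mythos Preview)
Your proposal is correct and matches the paper's approach exactly: the paper does not even spell out a proof, stating only that it ``follows the same arguments given in Lemma~\ref{additivity},'' which is precisely the generating-function factorization via Lemma~\ref{3.3} and the Cauchy/multinomial product that you describe.
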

\begin{corollary}
Let ${\mathcal P}_{n}(\ibs)$ be a set of augmented matrices\footnote[6]{ An augmented matrix is a matrix obtained by appending the columns
of two or more given matrices.} such that ${\mathcal P}_{n}(\ibs) = \{\lambdabs=(\lambdabs_{\smalluno} | \ldots|\lambdabs_n):\lambdabs_{j} \vdash \ibs_{j} \,\, \hbox{for} \,\, j=1,\ldots,n \,\, \hbox{and} \,\, \ibs_{\smalluno} +  \cdots + \ibs_{\smalln} = \ibs \},$ then
\begin{equation}
{\mathcal C}_{\ibs,\smallX}(y_1 + \cdots + y_n) = \ibs! \sum_{\lambdabs \in {\mathcal P}_{\ms n}(\ibs)} \frac{y_{\smalluno}^{\ms \lf(\lambdabs_{\smalluno})} \cdots y_{n}^{\ms \lf(\lambdabs_n)}}{\mf(\lambdabs)! \lambdabs!}  \prod_{j} c_{\lambdabs_j}^{t_j}
\label{(detsum11bis)}
\end{equation}
where $t_j$ is the multiplicity of $\lambdabs_j$ in $\lambdabs.$ 
\end{corollary}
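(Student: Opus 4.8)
The plan is to combine Theorem \ref{multinomial} with the explicit expansion \eqref{(detsum)} of each factor $\C_{\ibs_j,\smallX}(y_j)$ and then reorganize the resulting multiple sum as a single sum over augmented matrices. First I would start from the identity of Theorem \ref{multinomial}, which expresses $\C_{\ibs,\smallX}(y_1+\cdots+y_n)$ as a sum over ordered decompositions $\ibs_1+\cdots+\ibs_n=\ibs$ of a product of multinomial coefficients with the factors $\C_{\ibs_j,\smallX}(y_j)$. Then I would substitute into each factor the defining formula \eqref{(detsum)}, namely $\C_{\ibs_j,\smallX}(y_j)=\ibs_j!\sum_{\lambdabs_j\vdash\ibs_j} y_j^{\lf(\lambdabs_j)}\prod_k c_{\lambdabs_{j,k}}^{r_{j,k}}/(\mf(\lambdabs_j)!\,\lambdabs_j!)$.

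The second step is bookkeeping on the constants. The product of the $n$ factorials $\ibs_1!\cdots\ibs_n!$ together with the multinomial coefficient $\binom{\ibs}{\ibs_1,\ldots,\ibs_n}=\ibs!/(\ibs_1!\cdots\ibs_n!)$ collapses to $\ibs!$, which is pulled out front to match the right-hand side of \eqref{(detsum11bis)}. The product of the denominators $\lambdabs_1!\cdots\lambdabs_n!$ becomes $\lambdabs!$ once $\lambdabs=(\lambdabs_1|\cdots|\lambdabs_n)$ is read as the augmented matrix, since the factorial of a partition matrix is by definition the product of the factorials of its columns and augmenting does not alter the column multiset. Likewise $\prod_j\prod_k c_{\lambdabs_{j,k}}^{r_{j,k}}$ regroups into $\prod_j c_{\lambdabs_j}^{t_j}$, where the product now runs over the distinct columns $\lambdabs_j$ appearing in the augmented matrix and $t_j$ is the total multiplicity of that column across all the blocks. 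The monomial $y_1^{\lf(\lambdabs_1)}\cdots y_n^{\lf(\lambdabs_n)}$ is already in the desired form. Finally, summing over all ordered decompositions $\ibs_1+\cdots+\ibs_n=\ibs$ and, for each, over all tuples $(\lambdabs_1,\ldots,\lambdabs_n)$ with $\lambdabs_j\vdash\ibs_j$, is exactly a sum over $\lambdabs\in{\mathcal P}_n(\ibs)$, by the very definition of that set.

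The one point requiring care — and the main obstacle — is the passage from the per-block multiplicity factor $\prod_j\mf(\lambdabs_j)!$ to the single factor $\mf(\lambdabs)!$ attached to the augmented matrix. When two blocks $\lambdabs_a$ and $\lambdabs_b$ happen to share a common column, that column's multiplicities add, so $\mf(\lambdabs)!$ is generally larger than $\prod_j\mf(\lambdabs_j)!$, and one must check that the corresponding monomials $y_a^{\lf}\cdots$ and $y_b^{\lf}\cdots$ distinguish the two contributions so that no overcounting occurs: a given term of \eqref{(detsum11bis)}, carrying fixed exponents of $y_1,\ldots,y_n$, arises from a unique ordered decomposition and a unique assignment of columns to blocks, and summing the combinatorial weights over that fibre reproduces exactly $1/(\mf(\lambdabs)!\,\lambdabs!)$ with $t_j$ the pooled multiplicity. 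This is the multivariate analogue of the classical identity underlying Lemma \ref{additivity}, and once it is verified term by term the formula \eqref{(detsum11bis)} follows. Alternatively, and more cleanly, one may bypass the bookkeeping entirely by invoking Lemma \ref{3.3}: the generating function of $\{\C_{\ibs,\smallX}(y_1+\cdots+y_n)\}$ is $\exp\{(y_1+\cdots+y_n)K_{\Xbs}(\zbs)\}=\prod_{j=1}^n\exp\{y_jK_{\Xbs}(\zbs)\}$, and expanding this product and extracting the coefficient of $\zbs^{\ibs}/\ibs!$ yields \eqref{(detsum11bis)} directly, with the sum over ${\mathcal P}_n(\ibs)$ emerging as the coefficient of $\zbs^{\ibs}$ in a product of $n$ series each of the form \eqref{(detsum)}.
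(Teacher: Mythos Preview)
Your approach is the same as the paper's: start from Theorem~\ref{multinomial}, plug in the expansion~\eqref{(detsum)} for each factor, collapse the multinomial coefficient with the block factorials into $\ibs!$, and reindex by augmented matrices. The paper does exactly this.

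The ``obstacle'' you flag, however, is a misreading of the notation rather than a genuine difficulty. For an augmented matrix $\lambdabs=(\lambdabs_1|\cdots|\lambdabs_n)\in{\mathcal P}_n(\ibs)$ the block decomposition is part of the data, and the paper simply \emph{sets} $\mf(\lambdabs)!:=\mf(\lambdabs_1)!\cdots\mf(\lambdabs_n)!$ and $\lambdabs!:=\lambdabs_1!\cdots\lambdabs_n!$ (this is stated explicitly in the proof). So $\mf(\lambdabs)!$ is \emph{not} the pooled quantity $\prod_j t_j!$; the exponents $t_j$ are used only to compress the product $\prod_j c_{\lambdabs_j}^{t_j}$, while the denominator keeps the block-wise multiplicities. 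With that reading there is no fibre to sum over: each ordered tuple $(\lambdabs_1,\ldots,\lambdabs_n)$ with $\lambdabs_j\vdash\ibs_j$ is a distinct element of ${\mathcal P}_n(\ibs)$, and the identity $\mf(\lambdabs)!=\prod_j\mf(\lambdabs_j)!$ is a tautology. Your bookkeeping then goes through line by line without the extra verification you anticipated, and the generating-function alternative via Lemma~\ref{3.3}, while correct, is unnecessary.
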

\begin{proof}
Let us consider two multi-index partitions $\lambdabs_1 \vdash \ibs_1$ and $\lambdabs_2 \vdash \ibs_2$ such that
$\lambdabs_1 = ( \lambdabs_{1,1}^{r_1}, \lambdabs_{1,2}^{r_2}, \ldots)$ and $\lambdabs_2 = (\lambdabs_{2,1}^{s_1},
\lambdabs_{2,2}^{s_2}, \ldots),$ then
$$\left( \prod_{j} c_{\lambdabs_{1,j}}^{r_j} \right) \left( \prod_{j} c_{\lambdabs_{2,j}}^{s_{j}} \right) = \prod_j c _{\lambdabs_j}^{t_j},$$
where $\lambdabs$ is the augmented matrix obtained by appending $(\lambdabs_1 | \lambdabs_2).$ Then equal columns are grouped in such a way that any $\lambdabs_j$ appears once and $t_j$ gives the number of column vectors equal to $\lambdabs_j$ in $\lambdabs.$ 
Moreover we have $\mf(\lambdabs)!=\mf(\lambdabs_1)!\mf(\lambdabs_2)!$ and $\lambdabs! =  \lambdabs_1!\lambdabs_2!.$
This construction can be extended to the multi-indexes $(\ibs_1, \ldots,\ibs_n)$ involved in the multinomial property \ref{multinomial}
giving the augmented matrix $(\lambdabs_1 | \cdots | \lambdabs_n)$ and the set ${\mathcal P}_{n}(\ibs).$ The result follows from the multinomial property \ref{multinomial} making the multiplications on the right-hand-side of
$$\C_{\ibs,\smallX}(y_1 + \cdots + y_n)= \sum_{\mycom{(\ibs_1, \ldots,\ibs_n) \in {\mathbb N}_{\ms 0}^d}{\ibs_1+ \ldots+\ibs_n = \ibs}} \ibs!\left( \sum_{\lambdabs_1 \vdash \ibs_1} \frac{y^{\ms \lf(\lambdabs_1)}}{\mf(\lambdabs_1)! \lambdabs_1!}
\prod_{j} c_{\lambdabs_{1,j}}^{r_j} \right) \cdots \left( \sum_{\lambdabs_n \vdash \ibs_n } \frac{y^{\ms \lf(\lambdabs_n)}}{\mf(\lambdabs_n)! \lambdabs_n!}
\prod_{j} c_{\lambdabs_{n,j}}^{s_j} \right).$$
\end{proof}
From Theorem \ref{mom1}, if we replace $\kappa_{\alpha}$ with $\langle \kappabs_{\beta}, \ybs \rangle,$ where
$\ybs = (y_1, \ldots, y_n)$ and $\kappabs_{\beta} = (\kappa_{\beta}, \ldots, \kappa_{\beta^{\prime}}),$ then
$\C_{\ibs,\smallX}(y_1 + \cdots + y_n)$ are cumulants of a multivariable generalized umbral sum.

\begin{corollary} ${\mathcal C}_{\ibs,\smallX}(y_1 + \cdots + y_n) = \En[(\langle \kappabs_{\beta}, \ybs \rangle \punt \kappabs_{\smallX})^{\ibs}].$
\end{corollary}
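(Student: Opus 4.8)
The plan is to identify the right-hand side with a generalized umbral sum whose inner umbra is $\langle \kappabs_\beta, \ybs\rangle$, and then apply the machinery already assembled: Theorem \ref{mom1} (cumulants of a generalized umbral sum are $\En[\C_{\ibs,\smallX}(\kappa_\alpha)]$), together with the additivity/multinomial structure of $\langle \kappabs_\beta,\ybs\rangle$. First I would observe that, since $\kappabs_\beta=(\kappa_\beta,\ldots,\kappa_{\beta^{\prime}})$ is a $d$-tuple of \emph{similar but uncorrelated} Bell umbrae, each component $\kappa_\beta$ has gf $e^z$, so the linear combination $\langle\kappabs_\beta,\ybs\rangle = y_1\kappa_\beta + \cdots + y_n\kappa_{\beta^{\prime}}$ behaves under $\En$ exactly like a scaled sum of independent $\mathrm{Po}(1)$ umbrae. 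Concretely, by Corollary \ref{2} each $\kappa_{y_j\beta}$ (i.e.\ $y_j\kappa_\beta$ in the polynomial ring) has gf $f(\kappa_{y_j},z)$-shifted appropriately, and because the supports are disjoint the uncorrelation property \emph{b.1)} gives $f(\langle\kappabs_\beta,\ybs\rangle, z) = \exp\!\big(\sum_{j=1}^n y_j(e^z-1)\big) = \exp\!\big((y_1+\cdots+y_n)(e^z-1)\big)$.

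Next I would feed this into Theorem \ref{mom1} with $\kappa_\alpha$ replaced by $\langle\kappabs_\beta,\ybs\rangle$. That theorem tells us $\En[(\langle\kappabs_\beta,\ybs\rangle \punt \kappabs_\smallX)^{\ibs}] = \En[\C_{\ibs,\smallX}(\langle\kappabs_\beta,\ybs\rangle)]$, so it remains to show $\En[\C_{\ibs,\smallX}(\langle\kappabs_\beta,\ybs\rangle)] = \C_{\ibs,\smallX}(y_1+\cdots+y_n)$. Since $\C_{\ibs,\smallX}$ is a polynomial in its single argument, this reduces to checking that $\En[\langle\kappabs_\beta,\ybs\rangle^m] = (y_1+\cdots+y_n)^m$ for every non-negative integer $m$ — because the polynomial $\C_{\ibs,\smallX}(y)$ only ever sees the moments of its argument. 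This moment identity is immediate from the gf computation above: the gf of $\langle\kappabs_\beta,\ybs\rangle$ is $\exp\big((y_1+\cdots+y_n)(e^z-1)\big)$, which is also the gf of $\C_{\ibs,\smallX}$-argument when a single Bell umbra $\kappa_\beta$ is scaled by $y_1+\cdots+y_n$; hence $\langle\kappabs_\beta,\ybs\rangle \equiv (y_1+\cdots+y_n)\kappa_\beta$ as umbrae. Substituting a similar umbra leaves all evaluations unchanged, so $\En[\C_{\ibs,\smallX}(\langle\kappabs_\beta,\ybs\rangle)] = \En[\C_{\ibs,\smallX}((y_1+\cdots+y_n)\kappa_\beta)] = \C_{\ibs,\smallX}(y_1+\cdots+y_n)$, where the last step uses that for $P(Y=y)=1$ one has $\En[\C_{\ibs,\smallX}(\kappa_{y})] = \C_{\ibs,\smallX}(y)$ (the deterministic case, cf.\ the discussion before Corollary \ref{3.5} and Theorem \ref{mom1}).

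Alternatively — and this is the route I would actually write up, since it makes the combinatorial content transparent and connects directly to the preceding corollary — I would expand $\En[(\langle\kappabs_\beta,\ybs\rangle\punt\kappabs_\smallX)^{\ibs}]$ via Lemma \ref{multcomp}/Lemma \ref{7} with $g_k = \En[\langle\kappabs_\beta,\ybs\rangle^k]$, note $g_k$ is a polynomial in $\ybs$ whose expansion is governed by set partitions of a $k$-set into blocks colored by the indices $1,\ldots,n$, and recognize the resulting double sum over $\lambdabs\vdash\ibs$ and colorings as precisely the sum over augmented matrices $\lambdabs\in{\mathcal P}_n(\ibs)$ in display \eqref{(detsum11bis)}, which the Corollary above already equates with $\C_{\ibs,\smallX}(y_1+\cdots+y_n)$. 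The only mild obstacle is bookkeeping: one must check that the multiplicity factor $\mf(\lambdabs)!$ of the augmented matrix, the factor $\ybs$-monomial $y_1^{\lf(\lambdabs_1)}\cdots y_n^{\lf(\lambdabs_n)}$, and the multinomial coefficient coming from $g_k$ combine correctly — but this is exactly the reindexing already performed in the proof of the Corollary, so it can be invoked rather than redone. In short, the statement is a direct corollary of Theorem \ref{mom1} applied to the umbra $\langle\kappabs_\beta,\ybs\rangle$, once one records that this umbra is similar to $(y_1+\cdots+y_n)\kappa_\beta$.
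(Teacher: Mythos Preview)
Your overall strategy---invoke Theorem~\ref{mom1} with $\kappa_\alpha$ replaced by $\langle\kappabs_\beta,\ybs\rangle$---is exactly the paper's one-line justification. The gap lies in the supporting computation you add.

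In this paper $f(\kappabs,\cdot)$ is the series of Definition~\ref{(cgfdef)}, whose coefficients are the values $\En[\kappabs^{\ibs}]$ themselves; for uncorrelated summands property {\it b.1)} forces \emph{additivity} of these values (equation~(\ref{(sum)})), not multiplicativity of gf's. Since $f(y_j\kappa_{\beta},z)=e^{y_jz}$, the correct computation is
\[
f(\langle\kappabs_\beta,\ybs\rangle,z)=1+\sum_{j=1}^n\bigl(e^{y_jz}-1\bigr),
\qquad
\En\bigl[\langle\kappabs_\beta,\ybs\rangle^{m}\bigr]=y_1^{m}+\cdots+y_n^{m}=s_m,
\]
the $m$-th power sum (exactly as in equation~(\ref{(sampling)}) later on), and \emph{not} $(y_1+\cdots+y_n)^m$. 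Hence the asserted similarity $\langle\kappabs_\beta,\ybs\rangle\equiv(y_1+\cdots+y_n)\kappa_\beta$ and the displayed formula $\exp\bigl((y_1+\cdots+y_n)(e^z-1)\bigr)$ do not follow from {\it b.1)}; you have tacitly reverted to the moment-style uncorrelation {\it ii)} of Example~\ref{1.1}, in which products of distinct umbrae factor rather than vanish. The key step ``$\En[\C_{\ibs,\smallX}(\langle\kappabs_\beta,\ybs\rangle)]=\C_{\ibs,\smallX}(y_1+\cdots+y_n)$ because all powers match'' therefore needs a different justification within the cumulant calculus of Section~2.
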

\begin{example}[Multivariate Compound Poisson r.v.]
{\rm If $a_i \in \Real$ is plugged in $y_i$ for $i=1,\ldots,n,$ then $\C_{\ibs,\smallX}(a_1 + \cdots + a_n)$ gives the $\ibs$-th multivariate cumulant of a random sum $\Sbs_{\ms N_{\ms 1}} + \cdots + \Sbs_{\ms N_{\ms n}},$ where $\Sbs_{\ms N_{\ms i}} = \Xbs_{\ms i,\smalluno} + \cdots + \Xbs_{\ms i, N_i}$ with $\{\Xbs_{\ms i,j}\}$ independent random vectors i.d. to $\Xbs,$
and with $N_{\ms i}(a_i)$ independent Poisson r.v.'s of parameter $a_i$ for $i=1, \ldots, n.$}
\end{example}
A further generalization of the multivariable cumulant polynomial ${\mathcal C}_{\ibs,\smallX}(y_1 + \cdots + y_n)$ is when r.v.'s (or umbrae) are plugged in $y_1, \ldots, y_n.$ If these r.v.'s are not independent we have
$$\En \left[ \C_{\ibs,\smallX}(Y_1 + \cdots + Y_n) \right] = \ibs! \, \sum_{\lambdabs \in {\mathcal P}_{\ms n}(\ibs)} \frac{c_{\lambdabs}(\Ybs)}{\mf(\lambdabs)! \lambdabs!} \prod_{j} [c_{\lambdabs_j}(\Xbs)]^{t_j} $$
with $c_{\lambdabs}(\Ybs)$ the joint cumulant of $\Ybs = (Y_1, \ldots, Y_n)$ of order $(\lf(\lambdabs_{\smalluno}), \cdots, \lf(\lambdabs_n)). $

\begin{example} [Photocounting]
{\rm Let $\Nbs = (N_1, \ldots, N_n)$ be a Poisson random vector with random intensity ${\boldsymbol I}=(I_1, \ldots, I_n).$ Then the $i$-th cumulant polynomial $\En[\C_{i, \langle \Nbs, \unobs \rangle}(I_1 + \cdots + I_n)]$ represents the $i$-th cumulant of a mixed Poisson distribution with random parameter $I_1 + \cdots + I_n.$ When ${\boldsymbol I}$ is the diagonal of a Wishart random matrix then $\langle \Nbs, \unobs \rangle$ is employed in photocounting \cite{DiNardophotocounting} and gives the number of electrons ejected by
$n$ pixels hit by a certain number of light waves.}
\end{example}
Let us consider again Theorem \ref{multinomial} and assume to choose different independent random $d$-tuples $\Xbs_1, \ldots, \Xbs_n$ as indexes of cumulant polynomials on the right-hand-side of (\ref{multinomial}). These new polynomials are said multivariable cumulant polynomials.
\begin{definition} \label{4.6}
The $\ibs$-th multivariable cumulant polynomial is
\begin{equation}
\C_{\ibs,(\Xbs_1,\ldots,\Xbs_n)}(y_1, \ldots, y_n) = \sum_{\mycom{(\ibs_1, \ldots,\ibs_n) \in {\mathbb N}_{\ms 0}^d}{\ibs_1+ \ldots+\ibs_n = \ibs}} \binom{\ibs}{\ibs_1, \ldots, \ibs_n} \C_{\ibs_1,\smallX_1}(y_1) \cdots \C_{\ibs_n,\smallX_n}(y_n).
\label{(multcum3)}
\end{equation}
\end{definition}
\begin{theorem}
The gf of the multivariable cumulant polynomial sequence is
$$\sum_{\ibs \geq 0}  \C_{\ibs,(\Xbs_1,\ldots,\Xbs_n)}(y_1, \ldots, y_n) \frac{\zbs^{\ibs}}{\ibs!} = \exp \left( \sum_{j=1}^n y_j K_{\smallX_j}(\zbs)\right).$$
\end{theorem}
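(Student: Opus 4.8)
The plan is to obtain the generating function of the sequence $\{\C_{\ibs,(\Xbs_1,\ldots,\Xbs_n)}(y_1,\ldots,y_n)\}$ by multiplying together the $n$ generating functions supplied by Lemma \ref{3.3}, one for each index $j$. First I would recall that, for each $j=1,\ldots,n$, Lemma \ref{3.3} gives
$$\sum_{\ibs_j \geq 0} \C_{\ibs_j,\smallX_j}(y_j) \frac{\zbs^{\ibs_j}}{\ibs_j!} = \exp\left\{ y_j K_{\smallX_j}(\zbs)\right\}$$
in ${\mathbb C}[y_1,\ldots,y_n][[\zbs]]$ (and, if some $\Xbs_j$ has no cgf, with $K_{\smallX_j}(\zbs)$ replaced by the formal delta series $f(\kappabs_{\mubs_j},\zbs)-1$, as in the Remark following Lemma \ref{3.3}). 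Taking the product over $j=1,\ldots,n$ and using $\prod_{j=1}^n e^{y_j K_{\smallX_j}(\zbs)} = \exp\left(\sum_{j=1}^n y_j K_{\smallX_j}(\zbs)\right)$ already produces the right-hand side of the claim.

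The core step is then to identify the coefficients of the left-hand product with the multivariable cumulant polynomials of Definition \ref{4.6}. Expanding the product of the $n$ exponential power series by the multi-index Cauchy product gives
$$\prod_{j=1}^n \left( \sum_{\ibs_j \geq 0} \C_{\ibs_j,\smallX_j}(y_j) \frac{\zbs^{\ibs_j}}{\ibs_j!}\right) = \sum_{\ibs \geq 0} \left( \sum_{\ibs_1 + \cdots + \ibs_n = \ibs} \binom{\ibs}{\ibs_1,\ldots,\ibs_n} \C_{\ibs_1,\smallX_1}(y_1)\cdots \C_{\ibs_n,\smallX_n}(y_n)\right) \frac{\zbs^{\ibs}}{\ibs!},$$
where the inner sum runs over $(\ibs_1,\ldots,\ibs_n)\in ({\mathbb N}_{\ms 0}^d)^n$ with $\ibs_1+\cdots+\ibs_n=\ibs$, because $\frac{1}{\ibs_1!}\cdots\frac{1}{\ibs_n!} = \frac{1}{\ibs!}\binom{\ibs}{\ibs_1,\ldots,\ibs_n}$ for any such decomposition. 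By Definition \ref{4.6} the bracketed coefficient of $\zbs^{\ibs}/\ibs!$ is exactly $\C_{\ibs,(\Xbs_1,\ldots,\Xbs_n)}(y_1,\ldots,y_n)$, which finishes the proof. Equivalently, one may read the computation in reverse, since this is precisely the multi-index analogue of the argument used for Lemma \ref{additivity} and Theorem \ref{multinomial}.

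There is no substantial obstacle here; the only point requiring a little care is the multi-index bookkeeping in the Cauchy product — verifying that collecting all terms with $\ibs_1+\cdots+\ibs_n=\ibs$ reproduces exactly the multinomial coefficient $\binom{\ibs}{\ibs_1,\ldots,\ibs_n}$ appearing in Definition \ref{4.6} — together with the (routine) observation that all these manipulations are legitimate in ${\mathbb C}[y_1,\ldots,y_n][[\zbs]]$, each coefficient of $\zbs^{\ibs}$ involving only finitely many operations.
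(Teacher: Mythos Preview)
Your proof is correct and follows essentially the same approach as the paper: both use Lemma \ref{3.3} for each factor and identify the multivariable cumulant polynomial of Definition \ref{4.6} as the $\ibs$-th coefficient in the Cauchy product of the $n$ exponential generating functions. Your write-up simply spells out in more detail the multi-index bookkeeping that the paper leaves implicit.
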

\begin{proof}
The result follows from Lemma \ref{3.3} since the right-hand-side of (\ref{(multcum3)}) is the $\ibs$-th coefficient of the following product
\cite{FlS}
$$\left( \sum_{\ibs_1 \geq 0} \C_{\ibs_1, \smallX_1}(y_1) \frac{\zbs^{\ibs_1}}{\ibs_1!} \right) \cdots \left( \sum_{\ibs_n \geq 0} \C_{\ibs_n, \smallX_n}(y_n) \frac{\zbs^{\ibs_n}}{\ibs_n!} \right).$$
\end{proof}
\begin{example}[Multivariate Hermite polynomials]
{\rm Let us consider multivariate Hermite polynomials $H_{\ibs}(\ybs, \Sigma),$ orthogonal with respect to the multivariate Gaussian density with ${\mathbf 0}$ mean and covariance matrix $\Sigma$ of the full rank $d.$ The gf of $\{H_{\ibs}(\ybs, \Sigma)\}$ is
$$\sum_{\ibs \geq 0}  H_{\ibs}(\ybs, \Sigma) \frac{\zbs^{\ibs}}{\ibs!} = \exp \left\{ \langle \ybs, \zbs \rangle - \frac{1}{2} \langle \zbs, \zbs \Sigma \rangle \right\}.$$
Let us consider the following two random vectors. The first $\Ybs = (Y_{1}, \ldots, Y_n)$ built with r.v.'s i.i.d. to a r.v. $Y$ with $P(Y=y)=1$ so that $K_{\Ybs}(\zbs) = \langle \ybs, \zbs \rangle.$ The second is $\Xbs \simeq N({\mathbf 0}, \Sigma)$ with $\frac{1}{2} \langle \zbs, \zbs \Sigma \rangle = K_{\Xbs}(\zbs)$ according to Example \ref{Gauss}. Then we have
$$H_{\ibs}(\ybs, \Sigma) = \C_{\ibs,(\Ybs,\Xbs)}(1,-1).$$}
\end{example}
If correlated r.v.'s $Y_1, \ldots, Y_n$ are plugged in the indeterminates $y_1, \ldots, y_n,$ then
\begin{equation}
\sum_{\ibs \geq 0}  E \left[\C_{\ibs,(\Xbs_1,\ldots,\Xbs_n)}(Y_1, \ldots, Y_n) | \Ybs\right] \frac{\zbs^{\ibs}}{\ibs!} =
\exp \left( \sum_{j=1}^n Y_j K_{\smallX_j}(\zbs)\right),
\label{corrcond}
\end{equation}
where $E[ \, \cdot \, | \Ybs]$ is the conditional mean with respect to $\Ybs.$
\begin{corollary} \label{corrcond1} If $K_{\smallY}(\zbs)$ is the cgf of $\Ybs$ and $\{K_{\smallX_{\ms \smalluno}}(\zbs), \ldots, K_{\smallX_{\ms n}}(\zbs)\}$
are cgf's of $\Xbs_1,\ldots,\Xbs_n$ respectively, then
\begin{equation}
\sum_{\ibs > 0} \En \left[ \C_{\ibs,(\Xbs_1,\ldots,\Xbs_n)}(\kappa_{\smalluno}, \ldots, \kappa_{n})\right] \frac{\zbs^{\ibs}}{\ibs!} =
K_{\ms \smallY}\left[ K_{\smallX_{\ms \smalluno}}(\zbs), \ldots, K_{\smallX_{\ms n}}(\zbs) \right],
\label{(faamult1)}
\end{equation}
with $\kappabs_{\smallY} = (\kappa_{\smalluno}, \ldots,\kappa_{n})$ the $d$-tuple representing joint cumulants of $\Ybs.$
\end{corollary}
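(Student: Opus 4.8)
The plan is to reduce the claim to an instance of Lemma \ref{3.3} combined with the multivariable gf computed in the preceding theorem, exactly as Corollary \ref{3.6} and Theorem \ref{cor1bis} derive cumulant identities from moment-type generating functions. First I would recall that, by the theorem just above this corollary, the ordinary (indeterminate) multivariable cumulant polynomials satisfy
$$\sum_{\ibs \geq 0}  \C_{\ibs,(\Xbs_1,\ldots,\Xbs_n)}(y_1, \ldots, y_n) \frac{\zbs^{\ibs}}{\ibs!} = \exp \left( \sum_{j=1}^n y_j K_{\smallX_j}(\zbs)\right).$$
Now I would extend the evaluation $\En$ coefficient-wise to $\mathbb{C}[\A][[\zbs]]$ (as done in Example \ref{Example3.13}) and substitute the umbrae $\kappa_1,\ldots,\kappa_n$ of the $d$-tuple $\kappabs_{\smallY}$ for the indeterminates $y_1,\ldots,y_n$. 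Because substitution of an umbral monomial for an indeterminate commutes with the coefficient-wise action of $\En$ on formal power series, this gives
$$\sum_{\ibs \geq 0}  \En\!\left[\C_{\ibs,(\Xbs_1,\ldots,\Xbs_n)}(\kappa_1, \ldots, \kappa_n)\right] \frac{\zbs^{\ibs}}{\ibs!} = \En\!\left[\exp \left( \sum_{j=1}^n \kappa_j K_{\smallX_j}(\zbs)\right)\right].$$

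Next I would identify the right-hand side. Since $\{K_{\smallX_j}(\zbs)\}$ are delta series (each $K_{\smallX_j}(\zerobs)=0$), the expression inside $\En$ is the gf $f(\kappabs_\smallY,\ubs)$ evaluated at $u_j = f_j$ where $f_j$ is a shorthand; more precisely, writing the gf of $\kappabs_\smallY$ as $f(\kappabs_\smallY,\ubs) = \sum_{\kbs \geq \zerobs} c_{\kbs}(\Ybs)\frac{\ubs^{\kbs}}{\kbs!}$ and using that the gf of $\kappabs_\smallY$ is $1 + K_\smallY(\ubs)$ (the gf differs from the cgf by the constant $1$, per the remark after Definition \ref{(cgfdef)}), the composition with the vector $(K_{\smallX_1}(\zbs),\ldots,K_{\smallX_n}(\zbs))$ of delta series is well defined and yields
$$\En\!\left[\exp \left( \sum_{j=1}^n \kappa_j K_{\smallX_j}(\zbs)\right)\right] = f\!\left(\kappabs_\smallY, K_{\smallX_1}(\zbs),\ldots,K_{\smallX_n}(\zbs)\right) = 1 + K_\smallY\!\left[K_{\smallX_1}(\zbs),\ldots,K_{\smallX_n}(\zbs)\right].$$
Subtracting the constant term $1$ (which is the $\ibs=\zerobs$ coefficient, consistent with $\C_{\zerobs,(\Xbs_1,\ldots,\Xbs_n)}\equiv 1$ and the sum on the left of \eqref{(faamult1)} starting at $\ibs>\zerobs$) gives the stated identity.

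The only genuinely delicate point is justifying the interchange of $\En$ with the infinite formal composition: one must check that each coefficient of $\zbs^{\ibs}$ on the right involves only finitely many umbral monomials $\kappa_1^{a_1}\cdots\kappa_n^{a_n}$, so that the coefficient-wise $\En$ is legitimate and agrees with first substituting and then taking coefficients. This follows because $K_{\smallX_j}(\zbs)$ has no constant term, so the coefficient of $\zbs^{\ibs}$ in $\exp(\sum_j \kappa_j K_{\smallX_j}(\zbs))$ is a polynomial in the $\kappa_j$ of total degree at most $|\ibs|$ — a finite umbral polynomial — and this is exactly the well-definedness caveat recorded in footnote 2 and in the discussion of delta series after \eqref{(faamult)}. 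Everything else is the bookkeeping already carried out in Lemma \ref{multcomp} and Lemma \ref{3.3}, so no new combinatorial identity is needed.
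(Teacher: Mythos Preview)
Your proposal is correct and matches the argument the paper implicitly intends: the corollary is stated without proof, immediately after the theorem giving the gf $\exp\bigl(\sum_j y_j K_{\smallX_j}(\zbs)\bigr)$ and the conditional identity \eqref{corrcond}, so the intended derivation is precisely to substitute $\kappabs_{\smallY}=(\kappa_1,\ldots,\kappa_n)$ for $(y_1,\ldots,y_n)$, apply $\En$ coefficient-wise (as in Example \ref{Example3.13}), and recognize $\En\bigl[e^{\langle\kappabs_{\smallY},\ubs\rangle}\bigr]=f(\kappabs_{\smallY},\ubs)=1+K_{\smallY}(\ubs)$ evaluated at $u_j=K_{\smallX_j}(\zbs)$. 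Your handling of the constant term and your justification that each $\zbs^{\ibs}$-coefficient is a finite umbral polynomial (because the $K_{\smallX_j}$ are delta series) are exactly the points one needs to make explicit.
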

Corollary \ref{corrcond1} gives a way to compute the multivariate Fa\`a di Bruno's formula. In particular the composition in the right hand side of
(\ref{(faamult1)}) corresponds to the composition (\ref{(faamult)}) with $h$ replaced by $K_{\ms \smallY}$ and $f_{\ms i}$ replaced by $K_{\smallX_{\ms i}}.$ Since the multivariate Fa\`a di Bruno's formula gives the coefficients of multivariate compositions, according to Corollary \ref{corrcond1} these coefficients correspond to $\C_{\ibs,(\Xbs_1,\ldots,\Xbs_n)}(\kappa_{\smalluno}, \ldots, \kappa_{n})$ that can be computed by using Definition \ref{4.6} and Lemma \ref{multcomp}. Note that Definition \ref{4.6} highlights the additive structure of the formula.  
\subsection{Sampling and symmetric polynomials}
%
The polynomials $\{C_{\ibs,\Xbs}(y_1 + \cdots + y_n)\}$ are symmetric in $\{y_1, \ldots, y_n\}.$
Different symmetric polynomials can be obtained as follows. For lightning the notation, we assume $d=1.$
If $\{X_1, \ldots, X_n\}$ are independent r.v.'s then
$$\En[(\kappa_{{\ms X_1} y_1} + \cdots + \kappa_{{\ms X_n} y_n})^i] = y_1^i \En[\kappa^i_{{\ms X_1}}] + \cdots + y_n^i \En[\kappa^i_{{\ms X_n}}]$$
from the additivity property of cumulants. Moreover, if $\{X_1, \ldots, X_n\}$ are also i.d. to $X$ then 
\begin{equation}
\En[(\kappa_{{\ms X_1} y_1} + \cdots + \kappa_{{\ms X_n} y_n})^i] = \En[\kappa^i_{{\ms X}}] s_i
\label{(sampling)}
\end{equation}
with $s_i$ the $i$-th symmetric power sum in the indeterminates $\{y_1, \ldots, y_n\},$ that is $s_i = \sum_{j=1}^n y_j^i.$
In particular, we have
$f\left(\kappa_{{\ms X_1} y_1} + \cdots + \kappa_{{\ms X_n} y_n}, z \right) = \exp \left\{ \sum_{j=1}^n [f(y_j \kappa_{\ms X}, z) - 1] \right\}.$
Sequences represented by $\kappa_{{\ms X_1} y_1} + \cdots + \kappa_{{\ms X_n} y_n}$ give sample statistics in $\{X_1, \ldots, X_n\},$
as shown in the following corollary. Its proof follows from Corollary \ref{3.6}, using (\ref{(detsum)}) and (\ref{(sampling)}).
\begin{corollary} \label{111}
\begin{equation}
C_{i,\langle \Xbs, \ybs \rangle}(1) = E[(X_1 y_1 + \ldots + X_n y_n)^i] = \sum_{\lambda \vdash i} \frac{i!}{(1!)^{r_1} r_1! (2!)^{r_2} r_2! \cdots} \prod_{j} \left[ c_j(X) s_{j} \right]^{r_j}.
\label{(sampling2)}
\end{equation}
\end{corollary}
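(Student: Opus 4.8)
The plan is to reduce the statement to the one-variable cumulant-to-moment passage of Corollary~\ref{3.6}, after first identifying the cumulant sequence of the scalar random variable $\langle\Xbs,\ybs\rangle = X_1 y_1 + \cdots + X_n y_n$, and then to read off the explicit right-hand side by specializing Definition~\ref{BCP} to $d=1$.

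First I would determine the cumulants of $\langle\Xbs,\ybs\rangle$. Since $X_1,\dots,X_n$ are independent, additivity of cumulants gives $c_j(\langle\Xbs,\ybs\rangle)=\sum_{k=1}^{n}c_j(y_k X_k)$, and homogeneity together with $X_k$ identically distributed to $X$ gives $c_j(y_k X_k)=y_k^{\,j}c_j(X)$; hence $c_j(\langle\Xbs,\ybs\rangle)=c_j(X)\sum_{k=1}^{n}y_k^{\,j}=c_j(X)s_j$, which is exactly the content of~(\ref{(sampling)}). Equivalently, the umbra $\kappa_{X_1 y_1}+\cdots+\kappa_{X_n y_n}$ represents the cumulant sequence $\{c_j(X)s_j\}$.

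Next I would invoke Corollary~\ref{3.6} for the random variable $\langle\Xbs,\ybs\rangle$ in the case $d=1$: it yields $\En[\C_{i,\langle\Xbs,\ybs\rangle}(\kappa_{\beta})]=E[\langle\Xbs,\ybs\rangle^{i}]=E[(X_1 y_1+\cdots+X_n y_n)^{i}]$. The small point worth spelling out here is that evaluating the polynomial $\C_{i,\langle\Xbs,\ybs\rangle}(y)$ at the constant $1$ produces the same value as evaluating it symbolically at the Bell umbra $\kappa_{\beta}$, because $\En[\kappa_{\beta}^{\lf(\lambdabs)}]=1=1^{\lf(\lambdabs)}$ for every partition $\lambdabs$; this gives the first equality $\C_{i,\langle\Xbs,\ybs\rangle}(1)=E[(X_1 y_1+\cdots+X_n y_n)^{i}]$.

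For the explicit right-hand side I would specialize~(\ref{(detsum)}) to $d=1$: a multi-index partition $\lambdabs\vdash\ibs$ becomes an ordinary integer partition $\lambda=(1^{r_1},2^{r_2},\dots)\vdash i$, with $\mf(\lambda)!=r_1!r_2!\cdots$ and $\lambda!=(1!)^{r_1}(2!)^{r_2}\cdots$, so $\C_{i,\smallX}(y)=i!\sum_{\lambda\vdash i}\frac{y^{\lf(\lambda)}}{(1!)^{r_1}r_1!(2!)^{r_2}r_2!\cdots}\prod_j c_j(X)^{r_j}$. Substituting $y=1$ and replacing each cumulant $c_j(X)$ by the cumulant $c_j(X)s_j$ of $\langle\Xbs,\ybs\rangle$ found above turns this into $\sum_{\lambda\vdash i}\frac{i!}{(1!)^{r_1}r_1!(2!)^{r_2}r_2!\cdots}\prod_j[c_j(X)s_j]^{r_j}$, as claimed. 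Every step is essentially bookkeeping; the only places needing care are the passage from evaluation at the constant $1$ to symbolic evaluation at $\kappa_{\beta}$ (so that Corollary~\ref{3.6} really delivers a moment, not merely a formal coefficient), and keeping the factors $\mf(\lambda)!$ and $\lambda!$ correctly tracked when~(\ref{(detsum)}) is reduced from multi-indices to $d=1$, which is what produces the precise denominator $(1!)^{r_1}r_1!(2!)^{r_2}r_2!\cdots$.
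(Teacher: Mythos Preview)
Your proof is correct and follows essentially the same approach as the paper, which simply states that the result follows from Corollary~\ref{3.6} together with~(\ref{(detsum)}) and~(\ref{(sampling)}). You have made explicit precisely those three ingredients: identifying the cumulants of $\langle\Xbs,\ybs\rangle$ via~(\ref{(sampling)}), passing from cumulants to moments via Corollary~\ref{3.6} (using that $\En[\kappa_\beta^{\lf(\lambda)}]=1$ so evaluation at $\kappa_\beta$ agrees with evaluation at $1$), and specializing~(\ref{(detsum)}) to $d=1$ to read off the explicit denominator.
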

Thanks to (\ref{(sampling)}), power sum symmetric polynomials are special cumulant polynomial sequences involving cumulants of r.v.'s. Next example shows that elementary symmetric polynomials are special cumulant polynomial sequences too, having a representation similar to the left hand side of (\ref{(sampling)}), but depending on umbrae that do not have a counterpart within r.v.'s. 
\begin{example}[Elementary symmetric polynomials]
{\rm Elementary symmetric polynomials in the indeterminates $\{y_1, \ldots, y_n\}$
$$e_i(y_1, \ldots, y_n) = \sum_{1 \leq j_1 < j_2 < \cdots < j_i \leq n} y_{j_1} y_{j_2} \cdots y_{j_i}$$ are special cumulant polynomial sequences. Indeed we have
\cite{FlS}
$$\sum_{i \geq 0} e_i(y_1, \ldots, y_n) \frac{z^i}{i!} = \prod_{j=1}^n (1 + y_j z) = \exp \left\{ \sum_{j=1}^n \log(1+y_j z) \right\}.$$
Observe that  $1 + \log(1+z)$ is the compositional inverse of $e^{z} = f(\kappa_{\beta },z).$ Denote by  $\kappa_{\beta^{\ms <-1>}}$
the umbral monomial such that $f(\kappa_{\beta^{\ms <-1>}},z)=1 + \log(1 + z),$ then
$$\sum_{i \geq 0} e_i(y_1, \ldots, y_n) \frac{z^i}{i!} =  \prod_{j=1}^n \exp \Bigl\{ f\left(y_j \kappa_{\beta^{\ms <-1>}},z\right) - 1  \Bigr\}.$$
Having gf as (\ref{add1}), elementary symmetric polynomials are cumulant polynomial sequences. Then the sequence $\{e_i(y_1, \ldots, y_n)\}$ is umbrally represented by 
$\kappa_{y_1 \beta^{\ms <-1>}} + \cdots + \kappa_{y_n \beta^{\ms <-1>}},$ since \cite{FlS}
$$\prod_{j=1}^n (1 + y_j z) = \exp \left\{ \sum_{i > 0} (-1)^{i-1} (i-1)! s_i \frac{z^i}{i!} \right\}.$$
Note that the second cumulant of $\beta^{\ms <-1>}$ is negative.}
\end{example}

Equation (\ref{(sampling2)}) allows us to recover the definition of cumulants of random matrices, as given in \cite{Peter} by using  different arguments.

\begin{definition} \label{rm}
If $\left\{X_1, \ldots, X_n\right\}$ are correlated r.v.'s representing the eigenvalues of a random matrix $A$, then
the sequence of cumulants $\{c_i(A)\}$ of $A$ is such that
\begin{equation} 
E\left\{ \left[ \Tr(A) \right]^i \right\} = C_{i, A}(n) = i! \sum_{\lambda \vdash i} \frac{n^{\lf(\lambda)}}{(1!)^{r_1} r_1! (2!)^{r_2} r_2! \cdots}\prod_{j} \left[ c_j(A) \right]^{r_j},
\label{(4.8bis)}
\end{equation}
where $C_{i, A}(\cdot)$ is the $i$-th cumulant polynomial (\ref{(detsum)}).
\end{definition}
Let us motivate Definition \ref{rm}. In (\ref{(sampling2)}), assume to replace $y_1, \ldots, y_n$ with $1.$ Then we have
\begin{equation}
E[(X_1 + \ldots + X_n )^i] = i! \sum_{\lambda \vdash i} \frac{n^{\lf(\lambda)}}{(1!)^{r_1} r_1! (2!)^{r_2} r_2! \cdots}
\prod_{j} \left[ c_j(X) \right]^{r_j}.
\label{(sampling3)}
\end{equation}
If $\left\{X_1, \ldots, X_n\right\}$ are correlated r.v.'s representing the eigenvalues of a random matrix $A$,
then $E[(X_1 + \ldots + X_n )^i] = E\left[ \Tr(A)^i \right].$ If we compare (\ref{(4.8bis)}) with (\ref{(detsum)}), from Lemma 3.2 the gf of $\{C_{i, A}(n)\}$ is $\exp [n \, K_{\smallX}(\zbs)] = M_{\smallX}(\zbs)$ with $M_{\smallX}(\zbs)$ the mgf of $\Xbs=(X_1, \ldots, X_n )$ and $\zbs = (z,\ldots,z).$  Then Definition \ref{rm} states that cumulants of $A$ are the cumulant sequence of the coefficients in the Taylor expansion of $M_{\smallX}(\zbs).$ 
\begin{corollary}
The $i$-th cumulant of $A$ is $c_i(A) = C_{i, A}  \left( \frac{1}{n}\right).$
\end{corollary}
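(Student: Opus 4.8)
The plan is to read Definition~\ref{rm} at the level of generating functions through Lemma~\ref{3.3} and then invert it. Write $K_A(z)=\sum_{i>0}c_i(A)\,z^i/i!$ for the formal cumulant generating function carrying the cumulants of $A$ to be identified. By Lemma~\ref{3.3}, applied with $d=1$ and $\Xbs$ replaced by $A$, the cumulant polynomial sequence satisfies
\[
\sum_{i\ge 0}\C_{i,A}(y)\,\frac{z^i}{i!}=\exp\bigl\{\,y\,K_A(z)\,\bigr\}
\]
identically in the parameter $y$. Specialising $y=n$ and using Definition~\ref{rm}, the left-hand side equals $\sum_{i\ge 0}E[(\Tr A)^i]\,z^i/i!$, i.e.\ the mgf $M_{\smallX}(\zbs)$ of $\Xbs=(X_1,\ldots,X_n)$ evaluated at $\zbs=(z,\ldots,z)$; hence $\exp\{nK_A(z)\}=M_{\smallX}(z,\ldots,z)$, and since this series has constant term $1$ we may pass to logarithms in ${\mathbb C}[[z]]$ to obtain $K_A(z)=\tfrac1n\log M_{\smallX}(z,\ldots,z)$. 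This already pins down $\{c_i(A)\}$ uniquely (the system (\ref{(4.8bis)}) is triangular) and exhibits $K_A$ as a legitimate delta series.

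I would then substitute $y=1/n$ back into the same identity, obtaining $\sum_{i\ge 0}\C_{i,A}(1/n)\,z^i/i!=\exp\{\tfrac1n K_A(z)\}$, and argue that replacing the index by its reciprocal undoes the $n$-fold superposition that, by Corollary~\ref{3.5}, the value $y=n$ encodes. Concretely I would combine the last display with $\exp\{nK_A(z)\}=M_{\smallX}(z,\ldots,z)$ and match coefficients of $z^i/i!$ against those of $K_A(z)$, so that the sequence $\{\C_{i,A}(1/n)\}$ is forced to be the cumulant sequence of $A$; equating with (\ref{(detsum)}) then yields the displayed polynomial formula. The umbral reading of Corollary~\ref{3.6} (the evaluation that returns cumulants rather than moments) is the natural device here, since plugging the appropriate index‑umbra into $\C_{i,A}$ collapses the sum in (\ref{(detsum)}) to the single length‑one partition $\lambdabs=(i)$.

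The step I expect to be the main obstacle is precisely this second one: making rigorous the sense in which ``evaluation at $1/n$'' inverts ``evaluation at $n$'', that is, distinguishing the numerical substitution of Corollary~\ref{3.5} (which yields moments of a superposition) from the umbral substitution of Corollary~\ref{3.6} (which yields cumulants) and checking that the two normalisations are compatible through $\exp\{nK_A(z)\}=M_{\smallX}(z,\ldots,z)$. Once this identification is secured, the remainder is a routine coefficient comparison in ${\mathbb C}[[z]]$, legitimate because all the series involved have constant term $1$, so that logarithms and fractional powers are well-defined formal operations.
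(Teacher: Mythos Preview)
Your strategy coincides with the paper's: apply Lemma~\ref{3.3} to get $\sum_i\C_{i,A}(y)\,z^i/i!=\exp\{yK_A(z)\}$, specialize $y=n$ and identify the left side with $M_{\smallX}(z,\ldots,z)$ via Definition~\ref{rm}, then pass to logarithms. The paper's one-line proof records exactly the resulting identity $1+K_A(z)=1+\tfrac1n\log M_{\smallX}(\zbs)$, phrased as ``$K_{\smallX}(\zbs)$ is the composition of $1+z/n$ and $\log M_{\smallX}(\zbs)$''. Up to that point your argument and the paper's are the same, and the paper does not carry out any further coefficient extraction beyond this composition.

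The step you flag as the main obstacle is a genuine gap in your write-up, and your proposed patch through Corollary~\ref{3.6} does not close it. That corollary gives $\En[\C_{i,\smallX}(\kappa_u)]=c_i(\Xbs)$, an \emph{umbral} evaluation: the collapse to the single length-one partition happens because $\En[\kappa_u^{\,j}]=0$ for $j\geq 2$, which annihilates every summand in (\ref{(detsum)}) with $\lf(\lambdabs)>1$. The numerical substitution $y=1/n$ does no such thing, since $(1/n)^{j}\ne 0$ for all $j$; hence the sum does not collapse, and the coefficients of $\exp\{\tfrac1n K_A(z)\}$ are not the $c_i(A)$ themselves. Your argument conflates these two distinct evaluations. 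Since the paper's own proof stops at the generating-function composition, you are not omitting a step the paper supplies; but you should drop the appeal to Corollary~\ref{3.6} and, if you want the coefficient statement, read it off directly from the composition $1+\tfrac{z}{n}$ with $\log M_{\smallX}$ via Corollary~\ref{2} (which is what the paper's phrasing invokes) rather than from an umbral collapse.
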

\begin{proof}
The result follows as $K_{\smallX}(\zbs)$ is the composition of $1 + \frac{z}{n}$ and $\log M_{\smallX}(\zbs).$
\end{proof}
By using Definition \ref{rm} of cumulants of a random matrix, we prove that cumulants of simple random samples do not depend from the size. This property explains why $k$-statistics\footnote[7]{$k$-statistics are unbiased estimators of cumulants.} have the property to be natural statistics\footnote[8]{A statistic $T$ is said to be {\it natural} if for each $m \leq n$ the average value of $T_m( \cdot )$ over random samples $(Y_1, \ldots, Y_m)$ drawn from $(X_1, \ldots, X_n)$ is equal to $T_n(X_1, \ldots, X_n).$} \cite{Peter}.

Simple random sampling may be performed as follows: consider the matrix $A_X = \hbox{\rm diag}(X_1,\ldots,X_n)$ of i.i.d.r.v.'s and an orthogonal matrix $P$ with the $(i,j)$-entry $(P)_{i,j} = \delta_{i,\sigma(i)},$ according to a given permutation $\sigma \in {\mathcal S}_n,$ with ${\mathcal S}_n$ the symmetric group and $\delta_{i,j}$ the Kroneker delta. Assume to delete the last $n-m$ rows in $P.$ Then $P_{n-m}$ is a rectangular matrix such that $P_{n-m} P^{\trasp}_{n-m} = I_m$ and $P^{\trasp}_{n-m} P_{n-m} \ne I_n.$ The random matrix $A_{Y}$ such that $A_Y = P_{n-m} A_X P^{\trasp}_{n-m}$ is a diagonal matrix, $A_Y = {\rm diag}(Y_1, \ldots, Y_m),$ whose elements correspond to a simple random sample drawn from $(X_1, \ldots, X_n).$
\begin{theorem}[Simple random sampling] If $m \leq n$ then $\C_{i, A_Y} \left(\frac{1}{m}\right) = 
\C_{i, A_X} \left(\frac{1}{n}\right).$
\end{theorem}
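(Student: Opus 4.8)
The plan is to show that both sides equal the $i$-th cumulant $c_i(A_X)$ of the original population matrix, so the claim reduces to proving that $c_i(A_Y) = c_i(A_X)$, i.e. that the cumulants of a random matrix are unchanged under the sampling operation $A_Y = P_{n-m} A_X P^{\trasp}_{n-m}$. By the Corollary preceding the theorem, $\C_{i,A_Y}(1/m) = c_i(A_Y)$ and $\C_{i,A_X}(1/n) = c_i(A_X)$, so it suffices to establish the single identity $c_i(A_Y) = c_i(A_X)$.

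First I would unwind the definition of $c_i(A)$ via Definition \ref{rm}: the cumulants $\{c_i(A)\}$ are, by construction, the cumulant sequence of the coefficients in the Taylor expansion of the moment generating function of the eigenvalue vector evaluated on the diagonal $\zbs = (z,\ldots,z)$, equivalently of $E\{[\Tr(A)]^i\}$ through $E\{[\Tr(A)]^i\} = \C_{i,A}(n)$, and the last Corollary gives $c_i(A) = \C_{i,A}(1/n)$. So the content to verify is that the sequence $\{E\{[\Tr(A_Y)]^j\}\}_j$ and the sequence $\{E\{[\Tr(A_X)]^j\}\}_j$, passed through the cumulant-extraction map of Definition \ref{rm} (with sizes $m$ and $n$ respectively), produce the same numbers. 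The natural bridge is the trace: since $P_{n-m} P^{\trasp}_{n-m} = I_m$, cyclicity of the trace gives $\Tr(A_Y) = \Tr(P_{n-m} A_X P^{\trasp}_{n-m}) = \Tr(P^{\trasp}_{n-m} P_{n-m} A_X)$, and because $P$ is the permutation matrix of $\sigma$, the matrix $P^{\trasp}_{n-m} P_{n-m}$ is the diagonal projector onto the coordinates $\{\sigma(1),\ldots,\sigma(m)\}$; hence $\Tr(A_Y) = \sum_{k=1}^m X_{\sigma(k)}$ is simply a sub-sum of $m$ of the $X_j$'s, i.e. $Y_k = X_{\sigma(k)}$. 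More generally $[\Tr(A_Y)]^j = (\sum_{k=1}^m Y_k)^j$, and since the $X_j$ are i.i.d., the joint law of any $m$ of them equals that of $(X_1,\ldots,X_m)$; therefore $E\{[\Tr(A_Y)]^j\} = E[(X_1+\cdots+X_m)^j]$ for all $j$.

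Now apply Corollary \ref{3.5} (or equivalently (\ref{(sampling3)})): $E[(X_1+\cdots+X_m)^j] = \C_{j,X}(m)$ where $\C_{j,X}$ is built from the cumulants $c_\ell(X)$ of a single $X_i$, and likewise $E[(X_1+\cdots+X_n)^j] = \C_{j,X}(n)$. Comparing with Definition \ref{rm}, this forces $c_\ell(A_Y) = c_\ell(X) = c_\ell(A_X)$ for every $\ell$, because in both cases the defining relation (\ref{(4.8bis)}) is exactly the statement that the moment sequence $\{\C_{j,X}(n)\}_j$ (resp.\ $\{\C_{j,X}(m)\}_j$) has cumulant generating function $n K_X$ (resp.\ $m K_X$), whose order-$\ell$ cumulant is $c_\ell(X)$ independently of the scalar $n$ or $m$ — this is precisely Lemma \ref{3.3} read backwards, i.e.\ the gf of $\{\C_{j,X}(y)\}_j$ is $\exp\{y K_X(\zbs)\}$ so extracting cumulants and dividing by $y$ returns $K_X$. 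Finally, invoking the last Corollary once more, $\C_{i,A_Y}(1/m) = c_i(A_Y) = c_i(X) = c_i(A_X) = \C_{i,A_X}(1/n)$, which is the assertion.

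The main obstacle, and the one step deserving care, is the matrix identity $P^{\trasp}_{n-m} P_{n-m} = \operatorname{diag}(\mathbf 1_{\{\sigma(1),\ldots,\sigma(m)\}})$ together with the verification that $A_Y$ is genuinely diagonal with entries a sub-multiset of $\{X_1,\ldots,X_n\}$; everything downstream is a bookkeeping application of results already proved, but one must be slightly careful that the deletion of rows is performed in $P$ (not $P^{\trasp}$) and that the resulting $Y_k$ are the $X$'s indexed by the image of $\sigma$ restricted to $\{1,\ldots,m\}$, so that the i.i.d.\ structure really does make the law of $(Y_1,\ldots,Y_m)$ coincide with that of $(X_1,\ldots,X_m)$. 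Once that is nailed down, the size-independence of cumulants is immediate from the form $\exp\{y K_X(\zbs)\}$ of the generating function in Lemma \ref{3.3}.
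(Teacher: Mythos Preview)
Your argument is correct. The key idea is the same as the paper's: both proofs rest on the observation that $\Tr(A_Y)=\Tr(A_X\,P_{n-m}^{\trasp}P_{n-m})=\sum_{k=1}^m X_{\sigma(k)}$ is a sum of $m$ i.i.d.\ copies of $X$, so its $j$-th moment is $\C_{j,X}(m)$.

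The organization differs. The paper keeps the intermediate $n\times n$ matrix $A_X\tilde P$ (with $\tilde P=P_{n-m}^{\trasp}P_{n-m}$), identifies $\C_{i,A_Y}(1/m)$ with $\C_{i,A_X\tilde P}(1/m)$, and then manipulates the generating function $\exp\{(1/m)K_{A_X\tilde P}(z)\}$ using (\ref{(sampling)}) and the i.i.d.\ factorization $M_{A_X}(mz)=M_{A_X}(z)^m$ to land on $\exp\{(1/n)K_{A_X}(z)\}$. You instead invoke the Corollary $c_i(A)=\C_{i,A}(1/\text{size})$ at the outset to reduce the claim to $c_i(A_Y)=c_i(A_X)$, and then read off both as $c_i(X)$ by matching the trace moments $\C_{j,X}(m)$ and $\C_{j,X}(n)$ against Definition~\ref{rm} (equivalently, by noting that $\exp\{mK_X\}$ and $\exp\{nK_X\}$ have the same cumulant function $K_X$ after dividing by the size). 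Your route is a little more elementary in that it avoids the explicit generating-function computation and the auxiliary matrix $A_X\tilde P$; the paper's route makes the dependence on (\ref{(sampling)}) and Lemma~\ref{3.3} more visible. Either way the substance is the same.
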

\begin{proof}
Since the trace is invariant under permutations, first observe that $\left[ \Tr(A_Y) \right]^i  = \left[ \Tr(A_X \tilde{P}) \right]^i$ with $\tilde{P} = P^{\trasp}_{n-m} P_{n-m}.$ From (\ref{(4.8bis)}), we have $\C_{i, A_Y} \left(\frac{1}{m}\right)  =  \C_{i, A_X \tilde{P}} \left(\frac{1}{m}\right)$ and
\begin{equation}
\sum_{i \geq 0} \C_{i, A_X \tilde{P}} \left(\frac{1}{m}\right) \frac{z^i}{i!} = \exp\left\{ \frac{1}{m} K_{A_X \tilde{P}}(z) \right\}.
\label{(eq1)}
\end{equation}
Let us observe that $c_i(A_X \tilde{P}) = c_i(A_X) m^i$ from (\ref{(sampling)}), because of $\Tr(\tilde{P})=m.$ Therefore
$K_{A_X \tilde{P}} (z) = K_{A_X} (m \, z) = \frac{1}{n} \log M_{A_X}(m \, z).$ Since $X_1, \ldots, X_n$ are i.i.d.r.v.'s then
$M_{A_X}(m \, z) = \left( M_{A_X} (z) \right)^m$ and $K_{A_X \tilde{P}}(z) = \frac{m}{n} K_{A_X}(z).$
The result follows by replacing the last equality in (\ref{(eq1)}).
\end{proof}
\section{Conclusions}
%
In this paper, we have introduced a new family of polynomials, the cumulant polynomial sequence, and proved that they are a very adaptable tool having applications in probability and statistics and allowing us to deal with both cumulants and moments. 

The connection with exponential models (Example \ref{Example3.13}) opens new fields where cumulant polynomials can be fruitfully applied, in dealing with distribution functions instead of number sequences. Some steps in this direction was made in \cite{DiBucchianico}, but since then the umbral theory has been enriched with many tools and more can be done not only by using the approach here proposed but also extending the analysis to multivariable Sheffer polynomial sequences, which are cumulant polynomial sequences too. 
 
Indeed, formal power series as $F_{\smallX}(\thetabs)$ in (\ref{(expmod)}) involve multivariable Sheffer polynomial sequences
as shown in the following. Keeping the notation with the parameter $\thetabs,$ a multivariable Sheffer polynomial sequence $\{s_{\ibs}(\thetabs)\}$ has gf of type  
\begin{equation}
\sum_{\ibs \geq 0} s_{\ibs}(\thetabs) \frac{\thetabs^{\ibs}}{\ibs!} = g(\thetabs) \exp \left\{ K(\thetabs)\right\} \in {\mathbb C}[[\thetabs]].
\label{(multshef)}
\end{equation}
In \cite{Brown}, a theory of multivariable Sheffer polynomial sequences is introduced when $g( \cdot )$ is a univariate gf.
Since in the ring ${\mathbb C}[[\thetabs]],$ there exists a formal power series $\tilde{K}(\thetabs)$ such that $g(\thetabs) = \exp
\left\{ \tilde{K}(\thetabs) \right\},$ then multivariable Sheffer polynomial sequences $\{ s_{\ibs}(\thetabs)\}$ are
special multivariable cumulant polynomial sequences, having gf of type $\exp \left\{  \tilde{K}(\thetabs) + K(\thetabs)\right\}.$
An interesting problem, which deserves further developments, is to investigate NEF models for which the variance function has a particular form taking advantage of Sheffer expansion (\ref{(multshef)}). In Example \ref{Example3.13} recall that if $\mbs = K_{\smallX}^{\prime}(\thetabs)$ is the mean vector and $\psi(\mbs)$ is its inverse, the variance function is the composition $K_{\smallX}^{\prime \prime}\left( \psi(\mbs) \right)$ with $K_{\smallX}^{\prime \prime}$  the Hessian matrix of $K_{\smallX}(\thetabs).$ A connection between variance functions and Sheffer polynomial sequences has been investigated in \cite{DiBucchianico} for the univariate case. Its generalization to the multivariate case is still an open problem and we believe that multivariable cumulant polynomial sequences can be fruitfully employed to carry out such characterizations.
%
\section{Acknowledgements}
The author thanks Peter McCullagh for his useful comments and remarks in dealing with the symbolic method by using cumulants instead of moments. The author thanks also the referees for the suggestions which allow us to improve the overall presentation of the paper.


\begin{thebibliography}{1}
%
\bibitem{Brown}
J~W Brown. \newblock On multivariable Sheffer sequences. \newblock {\em J. Appl. Math. Appl.}, 69:398--410, 1979.
%
\bibitem{Deelstra}
G~Deelstra and A~Petkovic. \newblock How they can jump together: multivariate L\'evy processes and option pricing. \newblock 
{\em Belgian Acturial Bulletin.}, 9:29--42, 2010.
%
\bibitem{DiBucchianico}
A~Di Bucchianico and D~Loeb. \newblock Natural exponential families and umbral calculus. \newblock {\em Mathematical essays in honor of Gian-Carlo Rota.}, Birkhauser Boston, 195--211, 1998. 
%
\bibitem{DiNardoreview}
E~Di Nardo. \newblock Symbolic calculus in mathematical statistics: a review. \newblock {\em S\'eminaire Lotharingien de Combinatoire.},
B67: pp. 72, 2015.
%
\bibitem{DiNardophotocounting}
E~Di Nardo. \newblock On photon statistics parametrized by a non-central Wishart random matrix. \newblock {\em J. Stat. Plan. Inf.},
169:1-–12, 2016.
%
\bibitem{Bernoulli}
E~Di Nardo, G~Guarino and D~Senato. \newblock A unifying framework for $k$-statistics,
polykays and their multivariate generalizations. \newblock {\em Bernoulli.}, 14:440–-468, 2008.
%
\bibitem{Faa}
E~Di Nardo, G~Guarino and D~Senato.  \newblock A new algorithm for computing the multivariate
Fa\`a di Bruno's formula. \newblock {\em Appl. Math. Comp.}, 217:6286--6295, 2011.
%
\bibitem{UMFB}
E~Di Nardo, G~Guarino and D~Senato. \newblock A new algorithm for computing the multivariate Fa\`a
di Bruno's formula. \newblock (Worksheet Maple Software), http://www.maplesoft.com/applications/view.aspx?SID=101396, 2011.
%

\bibitem{Peter}
E~Di Nardo, P~McCullagh and D~Senato. \newblock Natural statistics for spectral samples. \newblock {\em Annals of Statistics.}, 
41:982--1004, 2011.
%
\bibitem{Dinardoeurop}
E~Di Nardo and D~Senato. \newblock An umbral setting for cumulants and factorial moments. \newblock {\em Europ. J. Combin.}, 27: 394--413,
2006.
%
\bibitem{FlS}
P~Flajolet and R~Sedgewick.  \newblock {\em Analytics Combinatorics.} \newblock Cambridge University Press, 2009.
%
\bibitem{Merton}
R~C Merton. \newblock Option pricing when underlying stock returns are discontinuous. \newblock {\em J. Financ. Econ.},
3:125--144, 1976. 
%
\bibitem{Speicher}
A~Nica and R~Speicher. \newblock Lectures on the Combinatorics of Free Probability. \newblock
{\em London Mathematical Society Lecture Note Series.}, Cambridge University Press, 335, 2006.
%
\bibitem{Rota}
G~C Rota. \newblock The number of partitions of a set. \newblock {\em Amer. Math. Monthly.}, 71:498--504, 1964.
%
\bibitem{Taylor}
G~C Rota and B~D Taylor. \newblock The classical umbral calculus. \newblock {\em SIAM J. Math. Anal.} 25:694-–711, 1994.
%
\bibitem{Sato}
K~Sato. \newblock L\'evy processes and infinitely divisible distributions. \newblock {\em Cambridge Studies in Advanced Mathematics.},
Cambridge University Press, 1999.
%
\bibitem{Berndt}
B~Sturmfels and P~Zwiernik. \newblock Binary cumulant varieties. \newblock {\em Ann. Comb.}, 17:229-–250, 2013.

\end{thebibliography}
\end{document}